\documentclass[11pt]{amsart}
\usepackage{amsmath,amssymb}
\setlength{\textwidth}{16.cm}
\setlength{\oddsidemargin}{0cm}
\setlength{\evensidemargin}{0cm}
\setlength{\topmargin}{0cm}
\setlength{\headheight}{0.5cm}%{0cm}
\setlength{\headsep}{0.5cm}
\setlength{\topskip}{0cm}
\setlength{\textheight}{21cm}
\setlength{\footskip}{.5cm}

\newtheorem{theorem}{Theorem}[section]

\newtheorem{corollary}{Corollary}[section]
\newtheorem{lemma}{Lemma}[section]
\theoremstyle{definition}
\newtheorem{definition}{Definition}[section]
\newtheorem{remark}{Remark}[section]

\usepackage{latexsym,amssymb}
\parskip=5pt

\begin{document}
\baselineskip=14.5pt
\title[A generalization of  T\'oth identity in the ring of Algebraic Integers]{A generalization of  T\'oth identity in the ring of Algebraic Integers involving a Dirichlet Character} 

\author{Subha Sarkar}
\address{Department of Mathematics, Ramakrishna Mission Vivekananda Educational and Research Institute, Belur Math, Howrah-711 202, West Bengal, India}
\email{subhasarkar2051993@gmail.com}
\subjclass[2010]{11A07, 11A25, 11R04, 20D99}
\keywords{Menon's identity, divisor function, Euler's totient function, ring of algebraic integers, Dedekind domain}

\begin{abstract}
The $k$-dimensional generalized Euler function $\varphi_k(n)$ is defined to be the number of ordered $k$-tuples $(a_1,a_2,\ldots, a_k) \in \mathbb{N}^k$ with $1\leq a_1,a_2,\ldots, a_k \leq n$ such that both the product $a_1a_2\cdots a_k$ and the sum $a_1+a_2+\cdots+a_k$ are co-prime to $n$. T\'oth proved that the identity

\medskip
\begin{equation*}
\sum_{\substack{a_1,a_2,\ldots, a_k=1 \\ \gcd(a_1a_2\cdots a_k,n)=1\\ \gcd(a_1+a_2+\cdots+a_k,n)=1}}^n \gcd(a_1+a_2+\cdots+a_k-1,n) =\varphi_k(n)\sigma_0(n), \;\; \text{ where } \sigma_s(n) = \sum_{d\mid n}d^s \;\; \text{ holds. }
\end{equation*}

\medskip

This identity can also be viewed as a generalization of Menon's identity. In this article, we generalize this identity to the ring of algebraic integers involving arithmetical functions and Dirichlet characters.
%Our result is a further generalization of recent results in \cite{js} and \cite{wj}.
%\medskip
%\begin{equation*}
%\sum_{\substack{a_1, a_2, a_1+a_2 \in \mathbb{Z}_n^* \\ b_1,b_2,\ldots, b_s \in \mathbb{Z}_n}} (a_1+a_2-1,b_1,b_2,\ldots, b_s,n)\chi(a_1) =\mu(d) \phi\left(\frac{n_0^2}{d}\right) \phi_2\left(\frac{n}{n_0}\right)\sigma_s\left(\frac{n}{d}\right).
%\end{equation*}
%\medskip
%where $\chi$ is a Dirichlet character modulo $n$ with conductor $d$, $n_0 \mid n$ such that $n_0$ has the same prime factors with $d$ and $\left(n_0, \frac{n}{n_0}\right) = 1$ and $\mu(n)$ is the M\"obius function.
 \end{abstract}

%\renewcommand{\thefootnote}{}
%
%\footnote{2020 \emph{Mathematics Subject Classification}: Primary 11A07, 11A25, 11R04.}

%\footnote{\emph{Key words and phrases}: Menon's identity, divisor function, Euler's totient function, Ring of algebraic integers, Dirichlet character}
%
%\renewcommand{\thefootnote}{\arabic{footnote}}
%\setcounter{footnote}{0}

%\subjclass[2020]{11A07, 11A25, 11R04}
%\keywords{Menon's identity, divisor function, Euler's totient function, Ring of algebraic integers, Dirichlet character}

\maketitle

\section{Introduction}
For a positive integer $n$, the classical Menon's identity \cite{menon} states that
\begin{equation}\label{menon}
\displaystyle\sum_{\substack{a=1 \\ \gcd(a,n)=1}}^n \gcd(a-1,n) =\varphi(n)\sigma_0(n),
\end{equation}
where $\varphi(n)$ is the Euler's totient function and $\sigma_0(n) =\displaystyle \sum_{d \mid n}1$.

\smallskip

Sury \cite{sury} generalized \eqref{menon} and proved that, for every integers $n \geq 1$ and $s\geq 0$,
\begin{equation}\label{sury}
\sum_{\substack{1 \leq a, b_1, b_2, \ldots, b_s \leq n \\ \gcd(a,n)=1}} \gcd(a-1,b_1,\ldots, b_s,n)= \varphi(n)\sigma_s(n), \; \text{ where }\; \sigma_s(n) =\displaystyle \sum_{d \mid n} d^s.
\end{equation}

\smallskip

%For an integer $n \geq 2$ and a Dirichlet character $\chi$ modulo $n$ with conductor $d$, Cao and Zhao \cite{zc} derived the following identity.
%\begin{equation}\label{zc}
%\sum_{\substack{a=1 \\ \gcd(a,n)=1}}^n \gcd(a-1,n)\chi(a) = \varphi(n) \sigma_0\left(\frac{n}{d}\right).
%\end{equation}

Li, Hu and Kim \cite{lhk} generalized \eqref{sury} and proved that
\begin{equation}\label{lhk}
\sum_{\substack{1 \leq a, b_1, b_2, \ldots, b_s \leq n \\ \gcd(a,n)=1}} \gcd(a-1,b_1,\ldots, b_s,n)\chi(a)= \varphi(n)\sigma_s\left(\frac{n}{d}\right),
\end{equation}
where $\chi$ is a Dirichlet character modulo $n$ with conductor $d$.

\smallskip

Miguel extended equations \eqref{menon} and \eqref{sury} in \cite{m, m1} from $\mathbb{Z}$ to any residually finite Dedekind domain $\mathcal{D}$. Menon's identity has been generalized in many direction by several authors (cf. \cite{zc,h,lk,toth2,toth3,wj1}).

For every positive integer $k$, T\'{o}th \cite{toth} introduced the $k$-dimensional generalized Euler's totient function $\varphi_{k}$ as,
\begin{equation*}
\varphi_k(n)= \sum_{\substack{a_1, a_2, \ldots, a_k =1 \\ \gcd(a_1\ldots a_k,n)=1 \\ \gcd(a_1+\cdots+a_k,n)=1}}^n 1.
\end{equation*} 

For every positive integer $k$, he proved that the function  $\varphi_k$ is multiplicative and $$\varphi_k(n)= \varphi(n)^k \prod_{p \mid n} \left(1-\frac{1}{p-1}+\frac{1}{(p-1)^2}+\cdots+ \frac{(-1)^{k-1}}{(p-1)^{k-1}}\right).$$
He also proved that the following Menon-type identity
\begin{equation}\label{toth}
\sum_{\substack{a_1, a_2, \ldots, a_k =1 \\ \gcd(a_1\ldots a_k,n)=1 \\ \gcd(a_1+\cdots+a_k,n)=1}}^n \gcd(a_1+\cdots+a_k-1,n) = \varphi_k(n)\sigma_0(n)\;\;\;\;\;\;\;\; \text{ holds. }
\end{equation}

%\begin{eqnarray*}
%\phi_k(n) &=& \phi(n)^k \prod_{p \mid n} \left(1- \frac{1}{p-1} + \frac{1}{(p-1)^2}- \cdots + (-1)^{k-1} \frac{1}{(p-1)^{k-1}}\right) \\
%&=& n^k \prod_{p \mid n} \left(1-\frac{1}{p}\right)\left(\left(1-\frac{1}{p}\right)^k - \frac{(-1)^k}{p^k} \right).
%\end{eqnarray*}
%
%\smallskip

%He also proved the following Menon-type identity for this generalized $\phi$ function as follows.

Here we note that the function $\phi_2(n)$ was introduced by Arai and Gakuen \cite{ag}, and Carlitz \cite{c} proved the corresponding formula for $\phi_2(n)$. Also for $k=2$, the identity \eqref{toth},
\begin{equation}\label{sr}
\sum_{\substack{a_1, a_2 =1 \\ \gcd(a_1a_2,n)=1 \\ \gcd(a_1+a_2,n)=1}}^n \gcd(a_1+a_2-1,n) = \varphi_2(n)\sigma_0(n)
\end{equation}
was deduced by Sita Ramaiah \cite{sr}.

Recently, Ji and Wang \cite{wj} and Chattopadhyay and Sarkar \cite{js} generalized Sita Ramaiah's identity in the ring of algebraic integers involving a Dirichlet character. Before we state the results of \cite{js} and \cite{wj}, we introduce the fundamental functions defined on the integral ideals of $\mathcal{O}_{K}$. In what follows, for an element $\alpha \in \mathcal{O}_{K}$, the principal ideal $\alpha\mathcal{O}_{K}$ is denoted by $\langle \alpha \rangle$ and $\gcd(\alpha,\mathfrak{a})$ will denote $\gcd(\langle \alpha \rangle,\mathfrak{a})$.

\medskip

\begin{definition}
Let $K$ be an algebraic number field with ring of integers $\mathcal{O}_{K}$ and $I(\mathcal{O}_K)$ the set of all nonzero integral ideals of $\mathcal{O}_K$. Then for any two arithmetical functions $f$ and $g$ on $I(\mathcal{O}_K)$ and for any $\mathfrak{n} \in I(\mathcal{O}_K)$, the Dirichlet convolution ``$\ast$" is defined by $$f\ast g (\mathfrak{n})= \sum_{\mathfrak{d}\mid \mathfrak{n}} f(\mathfrak{d})g(\mathfrak{n}/\mathfrak{d}).$$
\end{definition}

\smallskip

\begin{definition}
Let $K$ be an algebraic number field with ring of integers $\mathcal{O}_{K}$ and $\mathfrak{n} \in I(\mathcal{O}_{K})$. Then we define the following functions on the set $I(\mathcal{O}_{K})$. 
\begin{enumerate}
\item The M\"obius $\mu$ function is defined as 
\begin{equation*}
\mu(\mathfrak{n}) :=
\begin{cases}
    1   & ~ \text{ if } \mathfrak{n} = \mathcal{O}_{K},\\
    (-1)^{t} &  ~ \text{ if } \mathfrak{n} = \displaystyle\prod_{\i = 1}^{t}\mathfrak{p}_{i}; \mbox{ where each } \mathfrak{p}_{i} \subseteq \mathcal{O}_{K} \mbox{ is a prime ideal and } \mathfrak{p}_{i} \neq \mathfrak{p}_{j} \mbox{ for } i \neq j,\\
    0 & ~ \text{ otherwise. }
\end{cases}
\end{equation*}

\medskip

\item The Euler totient function is defined as $$\varphi(\mathfrak{n}) := |(\mathcal{O}_{K}/\mathfrak{n})^{*}| =  N(\mathfrak{n}) \prod_{\mathfrak{p}\mid \mathfrak{n}}\left(1-\frac{1}{N(\mathfrak{p})}\right),$$ where for any ideal $\mathfrak{a} \subseteq \mathcal{O}_K$, $N(\mathfrak{a})$ stands for the absolute norm $|\mathcal{O}_{K}/\mathfrak{a}|$.

\medskip

\item For an integer $s \geq 0$, the function $\sigma_{s}$ is defined by $$\sigma_{s}(\mathfrak{n}) := \displaystyle\sum_{\mathfrak{d} \mid \mathfrak{n}} N(\mathfrak{d})^{s}.$$ 

\item The function $\varphi_{2}(\mathfrak{n})$ is defined as $$\varphi_{2}(\mathfrak{n}) := \sum_{\substack{a,b \in (\mathcal{O}_{K}/\mathfrak{n})^{*} \\ a+b \in (\mathcal{O}_{K}/\mathfrak{n})^{*}}} 1.$$ It is also known that (cf. \cite{wj}), $\phi_{2}(\mathfrak{n}) = \phi(\mathfrak{n})^2 \displaystyle\sum_{\mathfrak{d}\mid \mathfrak{n}}\frac{\mu(\mathfrak{d})}{\phi(\mathfrak{d})}=\phi(\mathfrak{n})^2 \prod_{\mathfrak{p} \mid \mathfrak{n}} \left(1-\frac{1}{N(\mathfrak{p})-1}\right)$.
\end{enumerate}
\end{definition}

\bigskip

\begin{definition}
Let $K$ be an algebraic number field with ring of integers $\mathcal{O}_K$ and $\mathfrak{n} \in I(\mathcal{O}_{K})$. Let $\chi$ be any Dirichlet character modulo $\mathfrak{n}$. 
\begin{enumerate}
\item[(i)] An integral divisor $\mathfrak{d}$ of $\mathfrak{n}$ is said to be an {\it induced modulus} for $\chi$ if $\chi(a)=\chi(b)$ whenever $a,b \in (\mathcal{O}_{K}/\mathfrak{n})^{*}$ and $a \equiv b \pmod{\mathfrak{d}}$. The unique induced modulus $\mathfrak{d}_0$ such that for any induced modulus $\mathfrak{d}$, we have $\mathfrak{d}_0 \mid \mathfrak{d}$, is said to be the {\it conductor} of $\chi$.

\item[(ii)]A character $\chi$ modulo $\mathfrak{n}$ is said to be {\it primitive modulo} $\mathfrak{n}$ if it has no induced modulus $\mathfrak{d}$ with $\mathfrak{d} \neq \mathfrak{n}$. Let $\chi$ be any character modulo $\mathfrak{n}$ with conductor $\mathfrak{d}$. Then there is a unique primitive character $\psi$ modulo $\mathfrak{d}$ such that  $\chi(a)=\psi(a)$ for all $a\in (\mathcal{O}_{K}/\mathfrak{n})^{*}$.
\end{enumerate}
\end{definition}

\medskip

A special case of the result in \cite{js} is the following generalization of Sita Ramaiah's identity:
\begin{equation} \label{js}
\sum_{\substack{ a_1, a_2, a_1+a_2 \in (\mathcal{O}_{K}/\mathfrak{n})^* \\ b_1,b_2,\ldots, b_s \in \mathcal{O}_{K}/\mathfrak{n}}} N(\gcd(a_1+a_2-1,b_1,b_2,\ldots, b_s,\mathfrak{n}))\chi(a_1) =\mu(\mathfrak{d}) \varphi\left(\frac{\mathfrak{n}_{0}^{2}}{\mathfrak{d}}\right) \varphi_2\left(\frac{\mathfrak{n}}{\mathfrak{n}_{0}}\right)\sigma_s\left(\frac{\mathfrak{n}}{\mathfrak{d}}\right),
\end{equation}
where $\chi$ is a Dirichlet character modulo $\mathfrak{n}$ with conductor $\mathfrak{d}$, $\mathfrak{n}_{0} \mid \mathfrak{n}$ is such that $\mathfrak{n}_0$ has the same prime ideal factors as that of $\mathfrak{d}$ and $\gcd\left(\mathfrak{n}_0, \frac{\mathfrak{n}}{\mathfrak{n}_0}\right) = 1$.

In this article, we generalize T\'oth's identity \eqref{toth} to the ring of algebraic integers $\mathcal{O}_K$ concerning arithmetical functions on $I(\mathcal{O}_K)$ and Dirichlet characters. But before that, we need to define the $k$-dimensional generalized Euler function $\varphi_k(\mathfrak{n})$ on $I(\mathcal{O}_K)$.

\medskip

\begin{definition}
Let $K$ be an algebraic number field with ring of integers $\mathcal{O}_{K}$ and $\mathfrak{n} \in I(\mathcal{O}_{K})$. For any positive integer $k$, we define the $k$-dimensional generalized Euler function $\varphi_k$ as, $$\varphi_{k}(\mathfrak{n}) := \sum_{\substack{a_1,a_2,\ldots, a_k \in (\mathcal{O}_{K}/\mathfrak{n})^{*} \\ a_1+a_2+\cdots+a_k \in (\mathcal{O}_{K}/\mathfrak{n})^{*}}} 1.$$ Note that $\varphi_1(\mathfrak{n})=\varphi(\mathfrak{n})$. 
\end{definition}

The main results of this article are as follows.

\begin{theorem}\label{MAIN_TH1}
Let $K$ be an algebraic number field with ring of integers $\mathcal{O}_{K}$ and let $\mathfrak{n}$ be a non-zero ideal in $\mathcal{O}_{K}$. Then for every positive integer $k$, we have $$\varphi_{k}(\mathfrak{n}) = \varphi(\mathfrak{n})^k \prod_{\mathfrak{p} \mid \mathfrak{n}} \left(1-\frac{1}{N(\mathfrak{p})-1}+\frac{1}{(N(\mathfrak{p})-1)^2}+\cdots+ \frac{(-1)^{k-1}}{(N(\mathfrak{p})-1)^{k-1}}\right).$$
\end{theorem}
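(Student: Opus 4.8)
The plan is to prove the identity first for powers of a prime ideal and then extend it to arbitrary $\mathfrak{n}$ by multiplicativity. Multiplicativity of $\varphi_k$ follows from the Chinese Remainder Theorem: if $\mathfrak{m}$ and $\mathfrak{n}$ are coprime integral ideals, then $\mathcal{O}_K/\mathfrak{m}\mathfrak{n}\cong(\mathcal{O}_K/\mathfrak{m})\times(\mathcal{O}_K/\mathfrak{n})$ as rings, an element being a unit if and only if each of its components is, and this isomorphism carrying sums to componentwise sums; splitting each $a_i$ accordingly gives $\varphi_k(\mathfrak{m}\mathfrak{n})=\varphi_k(\mathfrak{m})\varphi_k(\mathfrak{n})$. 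Since $\mathfrak{n}\mapsto\varphi(\mathfrak{n})^k$ is multiplicative and the product over the prime divisors of $\mathfrak{n}$ is multiplicative as well, it suffices to verify the formula when $\mathfrak{n}=\mathfrak{p}^a$ is a power of a prime ideal.

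For such a prime power, write $q=N(\mathfrak{p})$. Because $\mathfrak{p}$ is prime, $\gcd(a_1\cdots a_k,\mathfrak{p}^a)=\mathcal{O}_K$ is equivalent to each $a_i$ lying in $(\mathcal{O}_K/\mathfrak{p}^a)^*$, and $a_1+\cdots+a_k\in(\mathcal{O}_K/\mathfrak{p}^a)^*$ is equivalent to $a_1+\cdots+a_k\notin\mathfrak{p}$, a condition that depends only on the residues of the $a_i$ modulo $\mathfrak{p}$. The natural reduction $(\mathcal{O}_K/\mathfrak{p}^a)^*\to(\mathcal{O}_K/\mathfrak{p})^*$ is a surjective group homomorphism, so all of its fibres have the same size $\varphi(\mathfrak{p}^a)/\varphi(\mathfrak{p})=q^{a-1}$. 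Grouping the admissible $k$-tuples according to their reductions modulo $\mathfrak{p}$ therefore yields $\varphi_k(\mathfrak{p}^a)=q^{(a-1)k}\,\varphi_k(\mathfrak{p})$.

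It remains to compute $\varphi_k(\mathfrak{p})$, i.e. to count the $k$-tuples $(a_1,\dots,a_k)$ of nonzero elements of the residue field $\mathbb{F}_q=\mathcal{O}_K/\mathfrak{p}$ whose sum is nonzero; call this number $f_k$. There are $(q-1)^k$ tuples with all entries nonzero, and among them those with zero sum are obtained by choosing $a_1,\dots,a_{k-1}$ nonzero and setting $a_k=-(a_1+\cdots+a_{k-1})$, which is nonzero exactly when $a_1+\cdots+a_{k-1}\neq 0$; hence there are precisely $f_{k-1}$ of them, so $f_k=(q-1)^k-f_{k-1}$ for $k\geq 2$, with $f_1=q-1$. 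Iterating gives $f_k=(q-1)^k-(q-1)^{k-1}+\cdots+(-1)^{k-1}(q-1)$, that is, $\varphi_k(\mathfrak{p})=(q-1)^k\bigl(1-\tfrac{1}{q-1}+\cdots+\tfrac{(-1)^{k-1}}{(q-1)^{k-1}}\bigr)$. Combining this with the previous step and with $\varphi(\mathfrak{p}^a)=q^{a-1}(q-1)$ gives the asserted formula for $\mathfrak{n}=\mathfrak{p}^a$, and multiplicativity then yields it for all $\mathfrak{n}$.

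The step demanding the most care is the prime power reduction: one must check that the unit condition on the sum genuinely factors through reduction modulo $\mathfrak{p}$ and that the reduction map on unit groups is a uniformly $q^{a-1}$-to-one surjection; everything afterwards is a routine recursion. Alternatively, one could evaluate $f_k$ by an additive-character computation over $\mathbb{F}_q$, writing the number of zero-sum tuples of nonzero elements as $\tfrac{1}{q}\sum_{t\in\mathbb{F}_q}\bigl(\sum_{a\neq 0}e(ta)\bigr)^k$ with $e$ a nontrivial additive character, but the recursion above is shorter and more self-contained.
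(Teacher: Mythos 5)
Your proof is correct, but it follows a genuinely different route from the paper. The paper introduces a two‑variable function $\varphi_k(\mathfrak{n},\mathfrak{m})$ counting $k$-tuples of units modulo $\mathfrak{n}$ whose sum is a unit modulo $\mathfrak{m}$, establishes the recursion $\varphi_{k}(\mathfrak{n}, \mathfrak{m}) = \varphi(\mathfrak{n}) \sum_{\mathfrak{d}\mid \mathfrak{m}}\mu(\mathfrak{d})\varphi_{k-1}(\mathfrak{n}, \mathfrak{d})/\varphi(\mathfrak{d})$ by Möbius inversion over the divisors of $\mathfrak{m}$ together with the equidistribution lemma for units in a residue class, and then inducts on $k$ to get a product formula for $\varphi_k(\mathfrak{n},\mathfrak{m})$, specializing to $\mathfrak{m}=\mathfrak{n}$. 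You instead localize: multiplicativity via the Chinese Remainder Theorem reduces everything to $\mathfrak{n}=\mathfrak{p}^a$, the observation that unit conditions modulo $\mathfrak{p}^a$ factor through the residue field (each fibre of $(\mathcal{O}_K/\mathfrak{p}^a)^*\to(\mathcal{O}_K/\mathfrak{p})^*$ having exactly $N(\mathfrak{p})^{a-1}$ elements) reduces further to $\mathfrak{n}=\mathfrak{p}$, and the count $f_k=(q-1)^k-f_{k-1}$ over $\mathbb{F}_q$ closes the argument; all of these steps check out, and the resulting formula $q^{(a-1)k}(q-1)^k\bigl(1-\tfrac{1}{q-1}+\cdots+\tfrac{(-1)^{k-1}}{(q-1)^{k-1}}\bigr)$ agrees with the claimed one since $\varphi(\mathfrak{p}^a)=q^{a-1}(q-1)$. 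Your argument is more elementary and arguably more transparent, since it isolates the combinatorial heart of the problem in the residue field; what the paper's approach buys is that the auxiliary function $\varphi_k(\mathfrak{n},\mathfrak{m})$ and its Möbius‑type recursion are exactly the tools reused in the proof of the main Menon‑type identity (Theorem \ref{MAIN-TH}), where the divisors $\mathfrak{e},\mathfrak{g}$ of $\mathfrak{n}$ appearing in the character sum are not prime powers and a purely local reduction would not suffice.
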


\medskip

\begin{theorem}\label{MAIN-TH}
Let $K$ be an algebraic number field with ring of integers $\mathcal{O}_K$ and let $\mathfrak{n}$ be a non-zero ideal in $\mathcal{O}_{K}$. Let $\chi$ be a Dirichlet character modulo $\mathfrak{n}$ with conductor $\mathfrak{d}$. Then for a fixed element $r \in \mathcal{O}_{K}$ with $(r, \mathfrak{n})=1$ and for any arithmetical function $f$ on $I(\mathcal{O}_K)$, we have
\begin{multline}\label{meq}
 \sum_{\substack{ a_1, a_2,\ldots, a_k \in (\mathcal{O}_{K}/\mathfrak{n})^*\\a_1+a_2+\cdots+a_k \in (\mathcal{O}_{K}/\mathfrak{n})^* \\ b_1,b_2,\ldots, b_s \in \mathcal{O}_{K}/\mathfrak{n}}} f(\gcd(a_1+a_2+\cdots+a_k-r,b_1,b_2,\ldots, b_s,\mathfrak{n}))\chi(a_1)\\ = \mu(\mathfrak{d})^{k-1}\psi(r)\varphi\left(\frac{\mathfrak{n}_{0}^{k}}{\mathfrak{d}^{k-1}}\right)\varphi_k\left(\frac{\mathfrak{n}}{\mathfrak{n}_{0}}\right) \sum_{\substack{\mathfrak{d}\mid \mathfrak{e}\mid \mathfrak{n}\\ \mathfrak{e}\mid b_1,\ldots,\mathfrak{e}\mid b_s}}\frac{(\mu \ast f)(\mathfrak{e})}{\varphi(\mathfrak{e})},
\end{multline}

where $\psi$ is the primitive character modulo $\mathfrak{d}$ that induces $\chi$, $\mathfrak{n}_{0} \mid \mathfrak{n}$ is such that $\mathfrak{n}_0$ has the same prime ideal factors as that of $\mathfrak{d}$ and $\gcd\left(\mathfrak{n}_0, \frac{\mathfrak{n}}{\mathfrak{n}_0}\right) = 1$.
\end{theorem}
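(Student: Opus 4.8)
The plan is to reduce the problem, via a CRT-type factorization over prime powers dividing $\mathfrak{n}$, to a local computation, following the template of \cite{js, wj} but with $k$ variables instead of $2$. First I would peel off the inner $b$-sum: for a fixed value of $\gcd(a_1+\cdots+a_k-r,\mathfrak{n})=\mathfrak{m}$, the number of $s$-tuples $(b_1,\ldots,b_s)\in(\mathcal{O}_K/\mathfrak{n})^s$ with $\gcd(a_1+\cdots+a_k-r,b_1,\ldots,b_s,\mathfrak{n})=\mathfrak{e}$ is a standard divisor count, so that $\sum_{b_1,\ldots,b_s}f(\gcd(\cdots,\mathfrak{n}))$ depends only on $\mathfrak{m}$ through a function $F(\mathfrak{m})=\sum_{\mathfrak{e}\mid\mathfrak{m}}(\mu\ast f)(\mathfrak{e})\,\sigma_s(\mathfrak{n}/\mathfrak{e})\cdot(\text{something})$; in fact this is exactly where the final sum $\sum_{\mathfrak{d}\mid\mathfrak{e}\mid\mathfrak{n},\ \mathfrak{e}\mid b_i}(\mu\ast f)(\mathfrak{e})/\varphi(\mathfrak{e})$ on the right-hand side comes from. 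The cleanest route is to \emph{not} evaluate this sum but to keep it in the shape already present on the right, so that only the $a$-side needs work: one shows
\begin{equation*}
\sum_{\substack{a_1,\ldots,a_k\in(\mathcal{O}_K/\mathfrak{n})^*\\ a_1+\cdots+a_k\in(\mathcal{O}_K/\mathfrak{n})^*}} \mathbf{1}\big[\mathfrak{e}\mid a_1+\cdots+a_k-r\big]\,\chi(a_1)
\end{equation*}
equals the desired main factor up to the $\varphi(\mathfrak{e})$ normalization, for each $\mathfrak{e}$ with $\mathfrak{d}\mid\mathfrak{e}\mid\mathfrak{n}$, and vanishes unless $\mathfrak{d}\mid\mathfrak{e}$ (because summing $\chi(a_1)$ over a residue class forces the conductor to divide the modulus of that class).

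Next I would handle the character sum. Writing $\chi=\psi$ pulled back from the conductor $\mathfrak{d}$, and using the factorization $\mathfrak{n}=\mathfrak{n}_0\cdot(\mathfrak{n}/\mathfrak{n}_0)$ with $\gcd(\mathfrak{n}_0,\mathfrak{n}/\mathfrak{n}_0)=1$, the Chinese Remainder Theorem splits $(\mathcal{O}_K/\mathfrak{n})^*\cong(\mathcal{O}_K/\mathfrak{n}_0)^*\times(\mathcal{O}_K/(\mathfrak{n}/\mathfrak{n}_0))^*$, and $\chi$ lives entirely on the first factor. The sum then factors as a product of a ``$\mathfrak{n}_0$-part'' carrying the character and the condition $\mathfrak{e}\mid a_1+\cdots+a_k-r$, and a ``$\mathfrak{n}/\mathfrak{n}_0$-part'' which is a $\varphi_k$-type count with the divisibility condition; the latter contributes $\varphi_k(\mathfrak{n}/\mathfrak{n}_0)$ (after noting $\gcd(\mathfrak{e},\mathfrak{n}/\mathfrak{n}_0)$ behaves trivially since $\mathfrak{e}$ is built from primes dividing $\mathfrak{d}\mid\mathfrak{n}_0$). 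This is the step where the factor $\varphi_k(\mathfrak{n}/\mathfrak{n}_0)$ in the statement is produced, and it is essentially bookkeeping once the CRT splitting is set up correctly.

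The heart of the matter is the local computation on $\mathfrak{n}_0$: for a prime power $\mathfrak{p}^a\,\|\,\mathfrak{n}_0$ with $\mathfrak{p}^c\,\|\,\mathfrak{d}$ (so $c\ge 1$), one must evaluate
\begin{equation*}
\sum_{\substack{a_1,\ldots,a_k\in(\mathcal{O}_K/\mathfrak{p}^a)^*\\ a_1+\cdots+a_k\in(\mathcal{O}_K/\mathfrak{p}^a)^*\\ \mathfrak{p}^c\mid a_1+\cdots+a_k-r}} \psi(a_1),
\end{equation*}
and show it equals $\mu(\mathfrak{p}^c)^{k-1}\psi(r)\,\varphi(\mathfrak{p}^{ak}/\mathfrak{p}^{c(k-1)})\cdot(\text{the }\varphi_k\text{-local factor for }\mathfrak p^a\text{ that is absorbed elsewhere})$. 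The idea is: fix $a_1$ coprime to $\mathfrak{p}$; then $a_1+\cdots+a_k\equiv r\pmod{\mathfrak{p}^c}$ pins $a_2+\cdots+a_{k-1}+a_k$ modulo $\mathfrak{p}^c$, and one counts the number of $(a_2,\ldots,a_k)$ with each $a_i\in(\mathcal{O}_K/\mathfrak{p}^a)^*$, their sum plus $a_1$ a unit, and the congruence mod $\mathfrak{p}^c$ satisfied — this count turns out to be independent of $a_1$ modulo $\mathfrak{p}^c$-cosets in a way that, after summing $\psi(a_1)$ over $a_1\equiv r\prod(\cdots)$, collapses via orthogonality unless $c=a$ (forcing $\mu(\mathfrak{p}^c)\ne 0$, i.e.\ $\mathfrak{d}$ squarefree at $\mathfrak{p}$... more precisely the $\mu(\mathfrak{d})^{k-1}$ shows that only squarefree $\mathfrak{d}$ survive with a sign, exactly as in \eqref{js} where $\mu(\mathfrak{d})$ appears linearly). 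Concretely I expect to induct on $k$: the $k=1$ case recovers the Li--Hu--Kim identity \eqref{lhk} (with general $f$ it is the $\varphi$-analogue of \cite{m}), and the inductive step introduces one new summation variable $a_k$, contributing one extra factor of $\mu(\mathfrak{d})$ and reshaping the $\varphi$-factor from $\varphi(\mathfrak{n}_0^{k-1}/\mathfrak{d}^{k-2})$ to $\varphi(\mathfrak{n}_0^{k}/\mathfrak{d}^{k-1})$, while turning $\varphi_{k-1}$ into $\varphi_k$ on the $\mathfrak{n}/\mathfrak{n}_0$ part. The main obstacle is precisely controlling this local count of $(a_2,\ldots,a_k)$ under the simultaneous constraints ``all units, sum with $a_1$ a unit, sum $\equiv r-a_1\pmod{\mathfrak{p}^c}$'': one has to disentangle the unit-condition on the total sum (which is what produces the $\varphi_k$ combinatorial factor $\prod(1-\frac1{N(\mathfrak p)-1}+\cdots)$ from Theorem~\ref{MAIN_TH1}) from the linear congruence condition, and verify that these interact multiplicatively so that Theorem~\ref{MAIN_TH1} can be invoked on the quotient $\mathfrak{n}/\mathfrak{n}_0$ cleanly. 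Once that local lemma is in hand, reassembling over all $\mathfrak{p}\mid\mathfrak{n}$ by multiplicativity and recombining with the retained $b$-sum gives \eqref{meq}.
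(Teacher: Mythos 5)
Your overall strategy --- expand $f=(\mu\ast f)\ast{\bf 1}$ so that only the weighted count $S(\mathfrak{e})=\sum\chi(a_1)$ over unit $k$-tuples with unit sum and $\mathfrak{e}\mid a_1+\cdots+a_k-r$ remains, kill the terms with $\mathfrak{d}\nmid\mathfrak{e}$ by orthogonality of the primitive character $\psi$, and induct on $k$ --- is the same skeleton as the paper's proof. But the proposal stops exactly where the work is. The orthogonality cannot be applied directly, because the congruence constrains $a_1+\cdots+a_k$, not $a_1$: one must first sum out $a_2,\ldots,a_k$ and show the resulting weight on $a_1$ is supported on residue classes whose moduli can be compared with $\mathfrak{d}$. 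In the paper this is done by Lemma \ref{rec} (a recursion for $N_k(\mathfrak{n},\mathfrak{e},\mathfrak{g},\mathfrak{d},u)$ obtained by summing over $a_k$ with Lemma \ref{2}, after the unit-sum condition has been opened up with its own M\"obius variable $\mathfrak{g}$ and carried alongside $\mathfrak{e}$) together with Lemma \ref{prodprime} (the induction on $k$ evaluating $\sum_u\psi(u)N_k$ as an explicit Euler product, where Lemma \ref{chr} finally forces $\mathfrak{d}\mid\mathfrak{e}$ and $\mathfrak{d}$ squarefree). This is precisely the ``disentangling of the unit-sum condition from the linear congruence'' that you yourself flag as the main obstacle; you describe the expected shape of the answer but do not establish it, so the argument is incomplete at its central step. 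The two conditions do not simply ``interact multiplicatively'': the $\mathfrak{g}$-sum and the $\mathfrak{e}$-sum must be kept coupled (note the constraint $(\mathfrak{e},\mathfrak{g})=1$) through every stage of the induction.

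Two of the concrete claims you do make are incorrect and would derail the bookkeeping. First, $\mathfrak{e}$ ranges over \emph{all} ideals with $\mathfrak{d}\mid\mathfrak{e}\mid\mathfrak{n}$, not only those supported on the primes of $\mathfrak{d}$; hence after the CRT splitting the $\mathfrak{n}/\mathfrak{n}_0$-part contributes $\varphi_k(\mathfrak{n}/\mathfrak{n}_0)/\varphi(\gcd(\mathfrak{e},\mathfrak{n}/\mathfrak{n}_0))$ rather than $\varphi_k(\mathfrak{n}/\mathfrak{n}_0)$ (the sum of a unit $k$-tuple is equidistributed over units, so the extra congruence mod $\gcd(\mathfrak{e},\mathfrak{n}/\mathfrak{n}_0)$ costs a factor $1/\varphi(\cdot)$); only after combining this with the $\mathfrak{n}_0$-part do you recover the single $1/\varphi(\mathfrak{e})$ appearing in \eqref{meq}. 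Second, orthogonality does not force $c=a$ at a prime with $\mathfrak{p}^a\,\|\,\mathfrak{n}_0$ and $\mathfrak{p}^c\,\|\,\mathfrak{d}$ (the theorem is nontrivially nonzero when $\mathfrak{d}\neq\mathfrak{n}_0$); what survives is $\mathfrak{d}\mid\mathfrak{e}$ together with, for $k\ge2$, $c=1$, i.e.\ $\mathfrak{d}$ squarefree, which is what the factor $\mu(\mathfrak{d})^{k-1}$ records. Also, no $\varphi_k$-type local factor appears at primes dividing $\mathfrak{d}$ in the final answer, contrary to what your local formula suggests. These slips are fixable, but as written the local factors would not reassemble to the stated right-hand side.
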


Let $f$ be the norm function $f(\mathfrak{n})=N(\mathfrak{n})$ in Theorem \ref{MAIN-TH}. Then we get the following

\medskip

\begin{corollary}\label{f=N}
Let $K$ be an algebraic number field with ring of integers $\mathcal{O}_K$ and let $\mathfrak{n}$ be a non-zero ideal in $\mathcal{O}_{K}$. Let $\chi$ be a Dirichlet character modulo $\mathfrak{n}$ with conductor $\mathfrak{d}$. Then for a fixed element $r \in \mathcal{O}_{K}$ with $(r, \mathfrak{n})=1$, we have
\begin{multline*}
 \sum_{\substack{ a_1, a_2,\ldots, a_k \in (\mathcal{O}_{K}/\mathfrak{n})^*\\a_1+a_2+\cdots+a_k \in (\mathcal{O}_{K}/\mathfrak{n})^* \\ b_1,b_2,\ldots, b_s \in \mathcal{O}_{K}/\mathfrak{n}}} N(\gcd(a_1+a_2+\cdots+a_k-r,b_1,b_2,\ldots, b_s,\mathfrak{n}))\chi(a_1)\\ = \mu(\mathfrak{d})^{k-1}\psi(r)\varphi\left(\frac{\mathfrak{n}_{0}^{k}}{\mathfrak{d}^{k-1}}\right)\varphi_k\left(\frac{\mathfrak{n}}{\mathfrak{n}_{0}}\right)\sigma_s\left(\frac{\mathfrak{n}}{\mathfrak{d}}\right),
\end{multline*}
%\sum_{\substack{\mathfrak{d}\mid \mathfrak{e}\mid \mathfrak{n}\\ \mathfrak{e}\mid b_1,\ldots,\mathfrak{e} \mid b_s}}1

where $\psi$ is the primitive character modulo $\mathfrak{d}$ that induces $\chi$, $\mathfrak{n}_{0} \mid \mathfrak{n}$ is such that $\mathfrak{n}_0$ has the same prime ideal factors as that of $\mathfrak{d}$ and $\gcd\left(\mathfrak{n}_0, \frac{\mathfrak{n}}{\mathfrak{n}_0}\right) = 1$.
\end{corollary}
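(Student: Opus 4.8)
The plan is to peel off, in three routine reductions, everything except a single character sum that carries all the arithmetic, namely
\[
C_k(\chi,\mathfrak{n}) := \sum_{\substack{x_1,\dots,x_k\in(\mathcal{O}_K/\mathfrak{n})^{*}\\ x_1+\cdots+x_k=1}}\chi(x_1),
\]
and then to evaluate $C_k(\chi,\mathfrak{n})$. The first reduction is M\"obius inversion over the integral ideals of $\mathcal{O}_K$. Writing $g=\mu\ast f$ (the function that already appears on the right of \eqref{meq}) we have $f=g\ast\mathbf{1}$, hence $f(\gcd(\alpha,b_1,\dots,b_s,\mathfrak{n}))=\sum g(\mathfrak{e})$, the sum over $\mathfrak{e}\mid\mathfrak{n}$ with $\mathfrak{e}\mid b_i$ for every $i$ and $\alpha\equiv 0\pmod{\mathfrak{e}}$. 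Taking $\alpha=a_1+\cdots+a_k-r$ and interchanging summations, the left side of \eqref{meq} becomes $\sum g(\mathfrak{e})\,S(\mathfrak{e})$, where $S(\mathfrak{e})$ is the sum of $\chi(a_1)$ over all $a_i\in(\mathcal{O}_K/\mathfrak{n})^{*}$ with $a_1+\cdots+a_k\in(\mathcal{O}_K/\mathfrak{n})^{*}$ and $a_1+\cdots+a_k\equiv r\pmod{\mathfrak{e}}$, and $\mathfrak{e}$ ranges over the divisors of $\mathfrak{n}$ dividing all the $b_i$.

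The second reduction decouples $S(\mathfrak{e})$ through the substitution $t=a_1+\cdots+a_k$, $x_i=a_i t^{-1}$: since $t$ is a unit this is a bijection between the tuples $(a_1,\dots,a_k)$ of units with unit sum and the pairs $\big(t,(x_1,\dots,x_k)\big)$ with $t\in(\mathcal{O}_K/\mathfrak{n})^{*}$, $x_i\in(\mathcal{O}_K/\mathfrak{n})^{*}$, $\sum x_i=1$. As $\chi(a_1)=\chi(t)\chi(x_1)$ and the congruence becomes $t\equiv r\pmod{\mathfrak{e}}$, this yields $S(\mathfrak{e})=C_k(\chi,\mathfrak{n})\cdot T(\mathfrak{e})$ with $T(\mathfrak{e})=\sum_{t}\chi(t)$ over units $t\equiv r\pmod{\mathfrak{e}}$. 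The third reduction evaluates $T(\mathfrak{e})$ by orthogonality of Dirichlet characters modulo $\mathfrak{e}$ (legitimate because $r$ and $t$ are units modulo $\mathfrak{e}$): $T(\mathfrak{e})=\varphi(\mathfrak{e})^{-1}\sum_{\lambda\bmod\mathfrak{e}}\bar\lambda(r)\sum_t(\chi\lambda)(t)$, and $\sum_t(\chi\lambda)(t)$ equals $\varphi(\mathfrak{n})$ precisely when $\lambda$ (read modulo $\mathfrak{n}$) is $\bar\chi$, which happens for some $\lambda$ modulo $\mathfrak{e}$ iff $\mathfrak{d}\mid\mathfrak{e}$, and then for the unique such $\lambda$ — the one induced by $\bar\psi$ — one has $\bar\lambda(r)=\psi(r)$. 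Hence $T(\mathfrak{e})=\psi(r)\varphi(\mathfrak{n})/\varphi(\mathfrak{e})$ if $\mathfrak{d}\mid\mathfrak{e}$ and $0$ otherwise, so the left side of \eqref{meq} equals $\psi(r)\,\varphi(\mathfrak{n})\,C_k(\chi,\mathfrak{n})\sum_{\mathfrak{d}\mid\mathfrak{e}\mid\mathfrak{n},\,\mathfrak{e}\mid b_i}g(\mathfrak{e})/\varphi(\mathfrak{e})$. Comparing with the claimed right side, the theorem is reduced to
\[
\varphi(\mathfrak{n})\,C_k(\chi,\mathfrak{n})=\mu(\mathfrak{d})^{k-1}\,\varphi\!\left(\frac{\mathfrak{n}_0^{k}}{\mathfrak{d}^{k-1}}\right)\varphi_k\!\left(\frac{\mathfrak{n}}{\mathfrak{n}_0}\right).
\]

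Both sides here are multiplicative under coprime factorizations of $\mathfrak{n}$ (Chinese Remainder Theorem, Theorem \ref{MAIN_TH1}, and multiplicativity of $\varphi$ and $\varphi_k$), so it suffices to take $\mathfrak{n}=\mathfrak{p}^{a}$ with $\chi$ of conductor $\mathfrak{p}^{c}$. When $c=0$ the bijection above gives $\varphi(\mathfrak{p}^{a})C_k(\mathbf{1},\mathfrak{p}^{a})=\varphi_k(\mathfrak{p}^{a})$, matching the claim since then the local parts of $\mathfrak{d}$ and $\mathfrak{n}_0$ are trivial. When $c\ge1$ (the case $k=1$ being immediate, $C_1=\chi(1)=1$) the claim is that $C_k(\chi,\mathfrak{p}^{a})=0$ for $c\ge2$ and $k\ge2$, while $C_k(\chi,\mathfrak{p}^{a})=(-1)^{k-1}N(\mathfrak{p})^{(a-1)(k-1)}$ for $c=1$. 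I would prove this by detecting $x_1+\cdots+x_k=1$ with the additive characters of $\mathcal{O}_K/\mathfrak{p}^{a}$, which expresses $C_k(\chi,\mathfrak{p}^{a})$ as a sum over $y\in\mathcal{O}_K/\mathfrak{p}^{a}$ of $e(-y)$ times a Gauss sum $\sum_x\chi(x)e(xy)$ times the $(k-1)$st power of a Ramanujan-type sum $\sum_x e(xy)$ (the $x$-sums over units). The Ramanujan factor annihilates every $y$ with $v_{\mathfrak{p}}(y)\le a-2$; the term $y=0$ dies since $\sum_x\chi(x)=0$; and the residues $y$ with $v_{\mathfrak{p}}(y)=a-1$ produce Gauss sums of $\chi$ against additive characters of conductor $\mathfrak{p}$, which vanish identically when $c\ge2$ and for $c=1$ descend to Gauss sums $g(\eta)$ over the residue field, where the classical evaluation $g(\eta)\,g(\bar\eta)=\eta(-1)N(\mathfrak{p})$ delivers exactly the asserted constant.

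The three reductions — M\"obius inversion, the change of variables, and the character orthogonality — are entirely routine; the crux, and the step I expect to be the main obstacle, is the prime-power evaluation of $C_k(\chi,\mathfrak{p}^{a})$: controlling the Gauss sums $\sum_{x\in(\mathcal{O}_K/\mathfrak{p}^{a})^{*}}\chi(x)e(xy)$ for \emph{non-primitive} $\chi$, proving they vanish once the conductor exponent is at least $2$, and summing them (twisted by $e(-y)$) over the residues of $\mathfrak{p}$-valuation $a-1$ to extract the precise power of $N(\mathfrak{p})$. As an independent check of the $c=1$ constant one can avoid additive characters and argue recursively: for $k\ge2$, splitting the sum defining $C_k(\chi,\mathfrak{p}^{a})$ according to whether $1-x_1$ is a unit rewrites it in terms of the number of representations of a unit as a sum of $k-1$ units modulo $\mathfrak{p}^{a}$, which is pinned down by the formula for $\varphi_{k-1}(\mathfrak{p}^{a})$ in Theorem \ref{MAIN_TH1}.
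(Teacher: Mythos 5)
Your argument is correct in outline and the specific values you assert all check out, but it takes a genuinely different route from the paper. The paper obtains this corollary by simply setting $f=N$ in Theorem \ref{MAIN-TH}: since $\mu\ast N=\varphi$, the factor $(\mu\ast f)(\mathfrak{e})/\varphi(\mathfrak{e})$ collapses to $1$ and the count of tuples $(b_1,\ldots,b_s)$ divisible by $\mathfrak{e}$ contributes $N(\mathfrak{n}/\mathfrak{e})^{s}$, whence $\sigma_s(\mathfrak{n}/\mathfrak{d})$; the theorem itself is proved there by a second M\"obius inversion detecting $a_1+\cdots+a_k\in(\mathcal{O}_K/\mathfrak{n})^{*}$, a recursion in $k$ for the counting function $N_k(\mathfrak{n},\mathfrak{e},\mathfrak{g},\mathfrak{d},u)$ (Lemmas \ref{rec} and \ref{prodprime}), and the character-sum Lemma \ref{chr}. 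Your substitution $t=a_1+\cdots+a_k$, $x_i=a_it^{-1}$ replaces both the second inversion and the induction: it factors the inner sum as $C_k(\chi,\mathfrak{n})\,T(\mathfrak{e})$, your orthogonality evaluation of $T(\mathfrak{e})$ plays exactly the role of Lemma \ref{chr}, and the entire $k$-dependence is concentrated in the single Jacobi-type sum $C_k$, evaluated prime by prime. I checked your local values: $\varphi(\mathfrak{p}^{a})C_k(\mathbf{1},\mathfrak{p}^{a})=\varphi_k(\mathfrak{p}^{a})$ when the local conductor is trivial, $C_k(\chi,\mathfrak{p}^{a})=(-1)^{k-1}N(\mathfrak{p})^{(a-1)(k-1)}$ when the local conductor is $\mathfrak{p}$, and $0$ when the conductor exponent is at least $2$ and $k\ge 2$; these do reproduce $\mu(\mathfrak{d})^{k-1}\varphi\bigl(\mathfrak{n}_0^{k}/\mathfrak{d}^{k-1}\bigr)\varphi_k(\mathfrak{n}/\mathfrak{n}_0)/\varphi(\mathfrak{n})$ factor by factor. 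What your route buys is structural transparency and the absence of any induction on $k$ in the main argument; what it costs is the Gauss-sum machinery over $\mathcal{O}_K/\mathfrak{p}^{a}$ (including the existence of a nondegenerate additive-character pairing on that ring, which you use implicitly), whereas the paper stays entirely within multiplicative characters and three quoted lemmas.

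Two small points remain to make this a complete proof. First, the step you yourself flag as the crux --- the vanishing of $\sum_{x}\chi(x)e(xy)$ at $v_{\mathfrak{p}}(y)=a-1$ when the conductor exponent is at least $2$, and the extraction of $(-1)^{k-1}N(\mathfrak{p})^{(a-1)(k-1)}$ when it equals $1$ --- is only sketched and needs to be written out. Second, since the target statement is the corollary rather than \eqref{meq}, you still need the one-line specialization $f=N$: namely $\mu\ast N=\varphi$ and $\sum_{\mathfrak{d}\mid\mathfrak{e}\mid\mathfrak{n}}N(\mathfrak{n}/\mathfrak{e})^{s}=\sigma_s(\mathfrak{n}/\mathfrak{d})$, which converts your final expression into the stated right-hand side.
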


In the special case of $r=1$, Corollary \ref{f=N} reduces to the following generalization of the equation \eqref{js}.

\begin{corollary}\label{r=1}
Let $K$ be an algebraic number field with ring of integers $\mathcal{O}_K$ and let $\mathfrak{n}$ be a non-zero ideal in $\mathcal{O}_{K}$. Let $\chi$ be a Dirichlet character modulo $\mathfrak{n}$ with conductor $\mathfrak{d}$. Then
\begin{multline*}
 \sum_{\substack{ a_1, a_2,\ldots, a_k \in (\mathcal{O}_{K}/\mathfrak{n})^*\\a_1+a_2+\cdots+a_k \in (\mathcal{O}_{K}/\mathfrak{n})^* \\ b_1,b_2,\ldots, b_s \in \mathcal{O}_{K}/\mathfrak{n}}} N(\gcd(a_1+a_2+\cdots+a_k-1,b_1,b_2,\ldots, b_s,\mathfrak{n}))\chi(a_1)\\ = \mu(\mathfrak{d})^{k-1}\varphi\left(\frac{\mathfrak{n}_{0}^{k}}{\mathfrak{d}^{k-1}}\right)\varphi_k\left(\frac{\mathfrak{n}}{\mathfrak{n}_{0}}\right) \sigma_s\left(\frac{\mathfrak{n}}{\mathfrak{d}}\right),
\end{multline*}

where $\mathfrak{n}_{0} \mid \mathfrak{n}$ is such that $\mathfrak{n}_0$ has the same prime ideal factors as that of $\mathfrak{d}$ and $\gcd\left(\mathfrak{n}_0, \frac{\mathfrak{n}}{\mathfrak{n}_0}\right) = 1$.
\end{corollary}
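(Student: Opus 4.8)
Since $\psi(1)=1$, Corollary~\ref{r=1} is just the specialisation $r=1$ of Corollary~\ref{f=N}, and Corollary~\ref{f=N} is in turn the case $f=N$ of Theorem~\ref{MAIN-TH}: from $\sum_{\mathfrak d\mid\mathfrak n}\varphi(\mathfrak d)=N(\mathfrak n)$ one has $\mu\ast N=\varphi$, so the factor $\sum_{\mathfrak d\mid\mathfrak e\mid\mathfrak n,\ \mathfrak e\mid b_i}(\mu\ast f)(\mathfrak e)/\varphi(\mathfrak e)$ becomes $\sum_{\mathfrak d\mid\mathfrak e\mid\mathfrak n,\ \mathfrak e\mid b_i}1$, and carrying out the summation over the $b_i$ (for each $\mathfrak e\mid\mathfrak n$ there are $N(\mathfrak n/\mathfrak e)^{s}$ tuples $(b_1,\ldots,b_s)$ with $\mathfrak e\mid b_i$) turns it into $\sum_{\mathfrak e\mid\mathfrak n/\mathfrak d}N\big((\mathfrak n/\mathfrak d)/\mathfrak e\big)^{s}=\sigma_s(\mathfrak n/\mathfrak d)$. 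Hence the real content is Theorem~\ref{MAIN-TH}, which I would prove in three steps.

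\textbf{Step 1 (unfold $f$).} Writing $g:=\mu\ast f$, so $f(\mathfrak m)=\sum_{\mathfrak e\mid\mathfrak m}g(\mathfrak e)$, I would substitute this for $\mathfrak m=\gcd(a_1+\cdots+a_k-r,b_1,\ldots,b_s,\mathfrak n)$; since $\mathfrak e\mid\gcd(\cdots)$ is equivalent to ``$\mathfrak e\mid\mathfrak n$, $\mathfrak e\mid b_i$ for all $i$, and $a_1+\cdots+a_k\equiv r\pmod{\mathfrak e}$'', interchanging the order of summation rewrites the left side of \eqref{meq} as $\sum_{\mathfrak e\mid\mathfrak n}g(\mathfrak e)\big(\sum_{b_i\in\mathcal O_K/\mathfrak n,\ \mathfrak e\mid b_i}1\big)S(\mathfrak e)$, where $S(\mathfrak e)=\sum\chi(a_1)$ over $a_1,\ldots,a_k\in(\mathcal O_K/\mathfrak n)^{*}$ with $a_1+\cdots+a_k\in(\mathcal O_K/\mathfrak n)^{*}$ and $a_1+\cdots+a_k\equiv r\pmod{\mathfrak e}$. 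Comparing with the asserted right side, it suffices to prove that $S(\mathfrak e)=0$ unless $\mathfrak d\mid\mathfrak e$ and that $S(\mathfrak e)=\mu(\mathfrak d)^{k-1}\psi(r)\varphi(\mathfrak n_0^{k}/\mathfrak d^{k-1})\varphi_k(\mathfrak n/\mathfrak n_0)/\varphi(\mathfrak e)$ when $\mathfrak d\mid\mathfrak e$. \textbf{Step 2 (split off the character).} Reparametrise $S(\mathfrak e)$ by $s=a_1+\cdots+a_k$ with $a_1=s-(a_2+\cdots+a_k)$, then replace $a_i\mapsto s\,a_i$ for $2\le i\le k$ (a bijection of the units, $s$ being a unit); this factorises $S(\mathfrak e)=\big(\sum_{s\in(\mathcal O_K/\mathfrak n)^{*},\ s\equiv r\,(\mathfrak e)}\chi(s)\big)\,C_k$, where $C_k=\sum\chi\big(1-(a_2+\cdots+a_k)\big)$ over $a_2,\ldots,a_k\in(\mathcal O_K/\mathfrak n)^{*}$ with $1-(a_2+\cdots+a_k)\in(\mathcal O_K/\mathfrak n)^{*}$. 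Writing $s=rt$ with $t$ running over $H_{\mathfrak e}=\ker\big((\mathcal O_K/\mathfrak n)^{*}\to(\mathcal O_K/\mathfrak e)^{*}\big)$, the first factor equals $\chi(r)\sum_{t\in H_{\mathfrak e}}\chi(t)$, which vanishes unless $\mathfrak e$ is an induced modulus of $\chi$, i.e. $\mathfrak d\mid\mathfrak e$, and otherwise equals $\psi(r)\,|H_{\mathfrak e}|=\psi(r)\varphi(\mathfrak n)/\varphi(\mathfrak e)$. So everything reduces to the identity $C_k=\mu(\mathfrak d)^{k-1}\varphi(\mathfrak n_0^{k}/\mathfrak d^{k-1})\varphi_k(\mathfrak n/\mathfrak n_0)/\varphi(\mathfrak n)$, which involves neither $\mathfrak e$ nor $r$.

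\textbf{Step 3 (evaluate $C_k$ locally).} Using $\mathcal O_K/\mathfrak n\cong\prod_{\mathfrak p\mid\mathfrak n}\mathcal O_K/\mathfrak p^{a_{\mathfrak p}}$ and the matching decomposition $\chi=\prod_{\mathfrak p}\chi_{\mathfrak p}$ ($\chi_{\mathfrak p}$ principal exactly when $\mathfrak p\nmid\mathfrak d$, of conductor $\mathfrak p^{c_{\mathfrak p}}$ otherwise), $C_k$ factorises as $\prod_{\mathfrak p\mid\mathfrak n}C_{k,\mathfrak p}$, with $C_{k,\mathfrak p}$ the analogous sum modulo $\mathfrak p^{a_{\mathfrak p}}$. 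For $\mathfrak p\nmid\mathfrak d$, the substitution $a_i\mapsto -a_i$ followed by the scaling $a_i\mapsto a_1 a_i$ (as in the evaluation of $\varphi_k$ behind Theorem~\ref{MAIN_TH1}) gives $\varphi_k(\mathfrak p^{a_{\mathfrak p}})=\varphi(\mathfrak p^{a_{\mathfrak p}})\,C_{k,\mathfrak p}$, so $C_{k,\mathfrak p}=\varphi_k(\mathfrak p^{a_{\mathfrak p}})/\varphi(\mathfrak p^{a_{\mathfrak p}})$. For $\mathfrak p\mid\mathfrak d$, group the sum by the value $v=1-(a_2+\cdots+a_k)$: the number of tuples with a prescribed $v$ is $N(\mathfrak p)^{(k-2)(a_{\mathfrak p}-1)}$ times a quantity depending only on $v\bmod\mathfrak p$, and, using $\sum_{v\in(\mathcal O_K/\mathfrak p^{a_{\mathfrak p}})^{*}}\chi_{\mathfrak p}(v)=0$, the sum collapses to $N(\mathfrak p)^{(k-2)(a_{\mathfrak p}-1)}(M_0-M)\sum_{v\in H_{\mathfrak p}}\chi_{\mathfrak p}(v)$, where $M_0$ (resp. $M$) counts $(k-1)$-tuples in $(\mathbb F_{N(\mathfrak p)}^{*})^{k-1}$ of sum $0$ (resp. a fixed nonzero value). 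One has $M_0-M=(-1)^{k-1}$ and $\sum_{v\in H_{\mathfrak p}}\chi_{\mathfrak p}(v)=|H_{\mathfrak p}|=N(\mathfrak p)^{a_{\mathfrak p}-1}$ if $c_{\mathfrak p}=1$, $=0$ if $c_{\mathfrak p}\ge 2$; hence $C_{k,\mathfrak p}=0$ whenever $\mathfrak p^{2}\mid\mathfrak d$, and $C_{k,\mathfrak p}=(-1)^{k-1}N(\mathfrak p)^{(k-1)(a_{\mathfrak p}-1)}$ when $\mathfrak p\mid\mathfrak d$ but $\mathfrak p^{2}\nmid\mathfrak d$. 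Multiplying over $\mathfrak p$: if $\mathfrak d$ is not squarefree and $k\ge2$, then $C_k=0=\mu(\mathfrak d)^{k-1}(\cdots)$; if $\mathfrak d$ is squarefree, the primes with $\mathfrak p\nmid\mathfrak d$ contribute $\varphi_k(\mathfrak n/\mathfrak n_0)/\varphi(\mathfrak n/\mathfrak n_0)$ and those with $\mathfrak p\mid\mathfrak d$ contribute $\mu(\mathfrak d)^{k-1}\varphi(\mathfrak n_0^{k}/\mathfrak d^{k-1})/\varphi(\mathfrak n_0)$, and these combine via $\varphi(\mathfrak n)=\varphi(\mathfrak n_0)\varphi(\mathfrak n/\mathfrak n_0)$ into the required value. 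Tracing back through Steps 1--2 then completes the proof of Theorem~\ref{MAIN-TH}, and with it the corollary.

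\textbf{Main obstacle.} The delicate point is Step 3: one must not confuse the condition ``$a_2+\cdots+a_k=1-v$ in $\mathcal O_K/\mathfrak p^{a_{\mathfrak p}}$'' with its reduction modulo $\mathfrak p$ — the former carries $N(\mathfrak p)^{(k-2)(a_{\mathfrak p}-1)}$ extra free lifts — and one must check that the combinatorial identity $M_0-M=(-1)^{k-1}$ together with the exponent bookkeeping inside $\mathfrak n_0^{k}/\mathfrak d^{k-1}$ make the local factors line up exactly with the claimed product. The remaining ingredients — M\"obius inversion on $I(\mathcal O_K)$, the vanishing of a character sum over a nontrivial coset, and the CRT factorisation — are routine.
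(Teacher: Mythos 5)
Your reduction of the corollary to Theorem \ref{MAIN-TH} (set $r=1$ so $\psi(r)=1$, take $f=N$, use $\mu\ast N=\varphi$, and count the $N(\mathfrak n/\mathfrak e)^{s}$ tuples $(b_1,\dots,b_s)$ divisible by $\mathfrak e$ to produce $\sigma_s(\mathfrak n/\mathfrak d)$) is exactly the paper's, but your proof of Theorem \ref{MAIN-TH} itself takes a genuinely different route. The paper encodes the condition $a_1+\cdots+a_k\in(\mathcal O_K/\mathfrak n)^{*}$ by a second M\"obius sum over $\mathfrak g\mid\mathfrak n$, introduces the counts $N_k(\mathfrak n,\mathfrak e,\mathfrak g,\mathfrak d,u)$, proves a recursion in $k$ for them (Lemma \ref{rec}, via Lemma \ref{2}), evaluates $\sum_u\psi(u)N_k$ by induction on $k$ (Lemma \ref{prodprime}, with Lemma \ref{chr} forcing $\mathfrak d\mid\mathfrak j$), and only at the very end reassembles the Euler products against Theorem \ref{MAIN_TH1}. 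Your Step 2 substitution $s=a_1+\cdots+a_k$, $a_i\mapsto sa_i$, instead decouples the character at the outset: all dependence on $\mathfrak e$ and $r$ is isolated in the single coset sum $\sum_{s\equiv r\,(\mathfrak e)}\chi(s)$ (which is precisely what Lemmas \ref{1} and \ref{chr} compute), and what remains is one constant $C_k$, evaluated by CRT and a residue-field count. I checked your local computations: the Hensel-type factor $N(\mathfrak p)^{(k-2)(a_{\mathfrak p}-1)}$, the identity $M_0-M=(-1)^{k-1}$, the vanishing of $\sum_{v\in H_{\mathfrak p}}\chi_{\mathfrak p}(v)$ exactly when $\mathfrak p^{2}\mid\mathfrak d$, and the bookkeeping $\varphi(\mathfrak p^{ka_{\mathfrak p}-k+1})/\varphi(\mathfrak p^{a_{\mathfrak p}})=N(\mathfrak p)^{(k-1)(a_{\mathfrak p}-1)}$ all line up with the stated right-hand side, including the degenerate cases ($\mathfrak d$ not squarefree giving $0=\mu(\mathfrak d)^{k-1}$, and $k=1$). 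What your approach buys is transparency and economy: it explains a priori why the answer factors as (conductor data) times $\varphi_k(\mathfrak n/\mathfrak n_0)$ times $\sigma_s(\mathfrak n/\mathfrak d)$, and it replaces the paper's double induction by one closed-form evaluation. What the paper's approach buys is that it never leaves the ideal-theoretic language (no explicit CRT splitting of $\chi$ or counting over $\mathcal O_K/\mathfrak p$) and reuses the Lemma \ref{2} machinery uniformly. Both arguments are correct.
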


\begin{remark}
Note that, for $K = \mathbb{Q}$ and $\chi={\bf 1}$, the trivial character, then Corollary \ref{r=1} reduces to
\begin{equation*}
\sum_{\substack{a_1, a_2, \ldots, a_k,b_1,\ldots,b_s =1 \\ \gcd(a_1\ldots a_k,n)=1 \\ \gcd(a_1+\cdots+a_k,n)=1}}^n \gcd(a_1+\cdots+a_k-1,b_1,\ldots,b_s,n) = \varphi_k(n)\sigma_s(n),
\end{equation*} 
which is a generalization of T\'oth's identity \eqref{toth}.
\end{remark}

%%%%%%%%%%%%%%%%%%%%%%%%%%%%%%%%%%%%

%For the sake of brevity, we use the notation $\mathcal{S}_{\chi}(n,s)$ to denote the sum in the LHS of \eqref{meq}.

%The rest of paper is organized as follows. In the next Section, we prove Theorem \ref{mt} in the special case of $n$ being a prime power. The general case is treated in Section 3 by combining the prime power cases with the Chinese remainder theorem.

\section{Preliminaries}
In this section, we fix an algebraic number field $K$ with ring of integers $\mathcal{O}_{K}$ and a non-zero ideal $\mathfrak{n}$ in $\mathcal{O}_{K}$. We need the following lemmas that have been proved in \cite{wj1}, and we record it here.

\begin{lemma}\cite{wj1}\label{1}
Let $\mathfrak{a}$ be a nonzero ideal in $\mathcal{O}_K$ such that $\mathfrak{a}\mid \mathfrak{n}$. Then for any $u\in \mathcal{O}_{K}$, we have
$$\sum_{\substack{a \in (\mathcal{O}_{K}/\mathfrak{n})^*\\ a \equiv u \pmod{\mathfrak{a}}}}1=
\begin{cases}
\frac{\varphi(\mathfrak{n})}{\varphi(\mathfrak{a})} & \text{ if } (u, \mathfrak{a})=1 \\
0 & \text{ otherwise. }
\end{cases}
$$
\end{lemma}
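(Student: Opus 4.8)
The plan is to split into two cases according to whether $\gcd(\langle u\rangle,\mathfrak a)$ is the unit ideal. Suppose first that it is not; then some prime ideal $\mathfrak p$ divides both $\langle u\rangle$ and $\mathfrak a$, and since $\mathfrak a\mid\mathfrak n$ also $\mathfrak p\mid\mathfrak n$. Any $a$ with $a\equiv u\pmod{\mathfrak a}$ then satisfies $a\equiv u\equiv 0\pmod{\mathfrak p}$, so $\mathfrak p\mid\gcd(\langle a\rangle,\mathfrak n)$ and $a\notin(\mathcal O_K/\mathfrak n)^{*}$; hence the set being counted is empty and its size is $0$, as claimed.

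Now suppose $\gcd(\langle u\rangle,\mathfrak a)=\mathcal O_K$. The plan is to realise the set in question as a fibre of a reduction map. Because $\mathfrak n\subseteq\mathfrak a$, there is a canonical surjective ring homomorphism $\pi\colon\mathcal O_K/\mathfrak n\to\mathcal O_K/\mathfrak a$, and it restricts to a group homomorphism $\pi^{*}\colon(\mathcal O_K/\mathfrak n)^{*}\to(\mathcal O_K/\mathfrak a)^{*}$. The crux is that $\pi^{*}$ is \emph{surjective}, i.e.\ that a unit modulo $\mathfrak a$ lifts to a unit modulo $\mathfrak n$: given a unit $\bar v$ mod $\mathfrak a$, lift it to $v\in\mathcal O_K$ with $\gcd(\langle v\rangle,\mathfrak a)=\mathcal O_K$ and, by a Chinese Remainder Theorem / prime-avoidance argument, choose $a\equiv v\pmod{\mathfrak a}$ lying outside every prime ideal dividing $\mathfrak n$ — the primes dividing $\mathfrak a$ are avoided automatically since $a\equiv v$ is a unit there, so only the finitely many primes $\mathfrak p\mid\mathfrak n$ with $\mathfrak p\nmid\mathfrak a$ impose a genuine condition, and each excludes only a proper part of the coset $v+\mathfrak a$. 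Granting surjectivity, every fibre of $\pi^{*}$ is a coset of $\ker\pi^{*}$ and hence has cardinality $|(\mathcal O_K/\mathfrak n)^{*}|/|(\mathcal O_K/\mathfrak a)^{*}|=\varphi(\mathfrak n)/\varphi(\mathfrak a)$; since $u\bmod\mathfrak a$ lies in $(\mathcal O_K/\mathfrak a)^{*}$, the set $\{a\in(\mathcal O_K/\mathfrak n)^{*}:a\equiv u\pmod{\mathfrak a}\}$ is precisely the fibre $(\pi^{*})^{-1}(u\bmod\mathfrak a)$, so it has cardinality $\varphi(\mathfrak n)/\varphi(\mathfrak a)$.

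An alternative, entirely self-contained route — which sidesteps the lifting step — is to detect coprimality by M\"obius inversion: from $[\gcd(\langle a\rangle,\mathfrak n)=\mathcal O_K]=\sum_{\mathfrak c\mid\gcd(\langle a\rangle,\mathfrak n)}\mu(\mathfrak c)$ one gets, after interchanging summations, that the count equals $\sum_{\mathfrak c\mid\mathfrak n}\mu(\mathfrak c)\,\#\{a\bmod\mathfrak n:\ a\equiv u\ (\mathrm{mod}\ \mathfrak a),\ a\equiv 0\ (\mathrm{mod}\ \mathfrak c)\}$. When $\gcd(\langle u\rangle,\mathfrak a)=\mathcal O_K$ the two congruences are compatible only if $\mathfrak c$ is coprime to $\mathfrak a$ (a common prime would divide $\langle u\rangle$), in which case $\mathfrak a\mathfrak c\mid\mathfrak n$ and the inner count is $N(\mathfrak n)/\bigl(N(\mathfrak a)N(\mathfrak c)\bigr)$ by CRT; summing, the count becomes $\tfrac{N(\mathfrak n)}{N(\mathfrak a)}\sum_{\mathfrak c\mid\mathfrak n,\ \gcd(\mathfrak c,\mathfrak a)=\mathcal O_K}\tfrac{\mu(\mathfrak c)}{N(\mathfrak c)}=\tfrac{N(\mathfrak n)}{N(\mathfrak a)}\prod_{\mathfrak p\mid\mathfrak n,\ \mathfrak p\nmid\mathfrak a}\bigl(1-\tfrac1{N(\mathfrak p)}\bigr)$, which collapses to $\varphi(\mathfrak n)/\varphi(\mathfrak a)$ using $\varphi(\mathfrak m)=N(\mathfrak m)\prod_{\mathfrak p\mid\mathfrak m}(1-1/N(\mathfrak p))$ together with $\mathfrak a\mid\mathfrak n$. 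The only point where any care is needed is the surjectivity of $\pi^{*}$ in the first route (equivalently, lifting a unit mod $\mathfrak a$ to a unit mod $\mathfrak n$, where the prime-avoidance step must be spelled out), or, in the second route, keeping straight which $\mathfrak c$ contribute and telescoping the Euler product; both are routine, and the lemma is genuinely elementary.
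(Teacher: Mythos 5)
The paper does not prove this lemma at all --- it is quoted verbatim from \cite{wj1} --- so there is no in-paper argument to compare against; what matters is whether your proof stands on its own, and it does. The vanishing case is handled correctly (a common prime $\mathfrak p$ of $\langle u\rangle$ and $\mathfrak a$ divides $\mathfrak n$ and kills invertibility of every $a\equiv u\pmod{\mathfrak a}$), and your main route --- identifying the set as a fibre of the reduction map $(\mathcal O_K/\mathfrak n)^{*}\to(\mathcal O_K/\mathfrak a)^{*}$ and invoking equicardinality of fibres of a surjective homomorphism of finite groups --- is the standard argument; the one genuinely non-trivial point, surjectivity of $\pi^{*}$, is correctly reduced to a CRT/prime-avoidance lift ($a\equiv v\pmod{\mathfrak a}$, $a\equiv 1\pmod{\mathfrak p}$ for the finitely many $\mathfrak p\mid\mathfrak n$ with $\mathfrak p\nmid\mathfrak a$), and you rightly observe that the primes dividing $\mathfrak a$ need no separate condition. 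The M\"obius-inversion alternative also checks out: compatibility of the two congruences forces $(\mathfrak c,\mathfrak a)=\mathcal O_K$, in which case $\mathfrak a\mathfrak c\mid\mathfrak n$ and the inner count is $N(\mathfrak n)/(N(\mathfrak a)N(\mathfrak c))$, and the resulting Euler product telescopes against $\varphi(\mathfrak n)/\varphi(\mathfrak a)$ precisely because every prime of $\mathfrak a$ divides $\mathfrak n$. Either route alone would suffice; the second has the mild advantage of being the same $\mu$-expansion technique the paper itself uses repeatedly (e.g.\ in Lemma 3.1 and Lemma 4.1), so it fits the surrounding text most naturally.
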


\medskip

\begin{lemma}\cite{wj1}\label{2}
Let $\mathfrak{a}$ and $\mathfrak{b}$ be two nonzero ideals in $\mathcal{O}_K$ such that $\mathfrak{a}\mid \mathfrak{n}$ and $\mathfrak{b}\mid \mathfrak{n}$. Then for any $u,v\in \mathcal{O}_{K}$, we have
$$\sum_{\substack{a \in (\mathcal{O}_{K}/\mathfrak{n})^*\\ a \equiv u \pmod{\mathfrak{a}}\\a \equiv v \pmod{\mathfrak{b}}}}1=
\begin{cases}
\frac{\varphi(\mathfrak{n})}{\varphi([\mathfrak{a},\mathfrak{b}])} & \text{ if } (u, \mathfrak{a})=(v, \mathfrak{b})=1 \text{ and } u \equiv v \pmod{(\mathfrak{a}, \mathfrak{b})} \\
0 & \text{ otherwise. }
\end{cases}
$$
\end{lemma}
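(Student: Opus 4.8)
The plan is to collapse the two simultaneous congruences into a single congruence modulo the least common multiple $[\mathfrak{a},\mathfrak{b}]$ and then invoke Lemma \ref{1}, which counts units subject to just one congruence. Since $\mathfrak{a}\mid\mathfrak{n}$ and $\mathfrak{b}\mid\mathfrak{n}$, we have $[\mathfrak{a},\mathfrak{b}]\mid\mathfrak{n}$, so Lemma \ref{1} will be applicable with modulus $[\mathfrak{a},\mathfrak{b}]$ once the pair of conditions has been rewritten as a single one.

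First I would analyze the solvability of the system $a\equiv u \pmod{\mathfrak{a}}$, $a\equiv v\pmod{\mathfrak{b}}$ via the Chinese Remainder Theorem for ideals in the Dedekind domain $\mathcal{O}_K$. Writing $(\mathfrak{a},\mathfrak{b})=\mathfrak{a}+\mathfrak{b}$ for the gcd and $[\mathfrak{a},\mathfrak{b}]=\mathfrak{a}\cap\mathfrak{b}$ for the lcm, the natural reduction map $\mathcal{O}_K/[\mathfrak{a},\mathfrak{b}]\to \mathcal{O}_K/\mathfrak{a}\times\mathcal{O}_K/\mathfrak{b}$ is injective with image exactly the pairs $(\bar u,\bar v)$ satisfying $u\equiv v\pmod{(\mathfrak{a},\mathfrak{b})}$. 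Hence the system is solvable if and only if $u\equiv v\pmod{(\mathfrak{a},\mathfrak{b})}$, and when it is, the solution set forms a single residue class modulo $[\mathfrak{a},\mathfrak{b}]$; I would fix a representative $w\in\mathcal{O}_K$ so that the two conditions together become the single congruence $a\equiv w\pmod{[\mathfrak{a},\mathfrak{b}]}$.

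Next I would apply Lemma \ref{1} to this single congruence, obtaining that the sum equals $\varphi(\mathfrak{n})/\varphi([\mathfrak{a},\mathfrak{b}])$ when $(w,[\mathfrak{a},\mathfrak{b}])=1$ and $0$ otherwise, leaving me to match the coprimality condition $(w,[\mathfrak{a},\mathfrak{b}])=1$ with the stated condition $(u,\mathfrak{a})=(v,\mathfrak{b})=1$. Since $w\equiv u\pmod{\mathfrak{a}}$ forces $\langle w\rangle+\mathfrak{a}=\langle u\rangle+\mathfrak{a}$, we have $(w,\mathfrak{a})=(u,\mathfrak{a})$, and likewise $(w,\mathfrak{b})=(v,\mathfrak{b})$. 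Because the prime ideals dividing $[\mathfrak{a},\mathfrak{b}]$ are exactly those dividing $\mathfrak{a}$ or $\mathfrak{b}$, it follows that $(w,[\mathfrak{a},\mathfrak{b}])=1$ if and only if $(w,\mathfrak{a})=1$ and $(w,\mathfrak{b})=1$, i.e.\ if and only if $(u,\mathfrak{a})=1$ and $(v,\mathfrak{b})=1$. Combining this with the solvability analysis yields precisely the claimed two-case formula.

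The main obstacle I anticipate is the careful bookkeeping of the ``otherwise'' case, since a vanishing sum arises for two logically distinct reasons that must be kept separate: an inconsistent system (empty range, detected by the failure of $u\equiv v\pmod{(\mathfrak{a},\mathfrak{b})}$) versus a consistent but non-coprime residue (detected by Lemma \ref{1}). One must check that the nonzero case is captured exactly by the conjunction of all three conditions $(u,\mathfrak{a})=1$, $(v,\mathfrak{b})=1$, and $u\equiv v\pmod{(\mathfrak{a},\mathfrak{b})}$. The only other delicate point is the reduction of coprimality of $w$ against the lcm to coprimality against $\mathfrak{a}$ and $\mathfrak{b}$ separately, which relies on the prime factorization of ideals in $\mathcal{O}_K$.
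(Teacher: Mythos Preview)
Your argument is correct. The reduction via the generalized Chinese Remainder Theorem to a single congruence modulo $[\mathfrak{a},\mathfrak{b}]$, followed by an application of Lemma~\ref{1}, is exactly the natural route, and your verification that $(w,[\mathfrak{a},\mathfrak{b}])=1$ is equivalent to $(u,\mathfrak{a})=(v,\mathfrak{b})=1$ is clean and accurate.

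There is, however, nothing to compare against here: the paper does not give its own proof of this lemma. It is stated with the citation \cite{wj1} and recorded in the preliminaries section with the remark that it ``has been proved in \cite{wj1}.'' So your proposal is not an alternative to the paper's argument but rather a self-contained justification of a result the paper imports. Your approach is in any case the expected one and is almost certainly what the cited reference does as well, since Lemma~\ref{1} (also taken from \cite{wj1}) is precisely the single-congruence version that the two-congruence version reduces to.
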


\medskip

\begin{lemma}\cite{wj1}\label{chr}
Let $\psi$ a primitive character modulo $\mathfrak{n}$ and $r \in \mathcal{O}_K$ with $(r, \mathfrak{n})=1$. Then for any $\mathfrak{d}\mid \mathfrak{n}$, $$\sum_{\substack{a \in (\mathcal{O}_{K}/\mathfrak{n})^*\\ a\equiv r \pmod{\mathfrak{d}}}} \psi(a)\neq 0  \;\;\;\;\text{ if and only if }\;\;\;\; \mathfrak{d}=\mathfrak{n}.$$ In particular, if $\mathfrak{d}= \mathfrak{n}$, then $$\sum_{\substack{a \in (\mathcal{O}_{K}/\mathfrak{n})^*\\ a\equiv r \pmod{\mathfrak{d}}}} \psi(a) = \psi(r).$$
\end{lemma}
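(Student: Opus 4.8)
\emph{Proof proposal.} The plan is to rewrite the character sum as a sum over a subgroup of $(\mathcal{O}_{K}/\mathfrak{n})^{*}$ and then apply orthogonality of characters on a finite abelian group. Write $U=(\mathcal{O}_{K}/\mathfrak{n})^{*}$, and for a divisor $\mathfrak{d}\mid\mathfrak{n}$ let
\[
H_{\mathfrak{d}}:=\{\,h\in U:\ h\equiv 1 \pmod{\mathfrak{d}}\,\}
\]
denote the kernel of the natural reduction homomorphism $U\to(\mathcal{O}_{K}/\mathfrak{d})^{*}$ (well defined since $\mathfrak{n}\subseteq\mathfrak{d}$), a subgroup of $U$.

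First I would observe that, because $(r,\mathfrak{n})=1$, the element $r$ defines a class in $U$, and for $a\in U$ one has $a\equiv r\pmod{\mathfrak{d}}$ if and only if $ar^{-1}\in H_{\mathfrak{d}}$; hence the index set $\{a\in U:\ a\equiv r\pmod{\mathfrak{d}}\}$ is precisely the coset $rH_{\mathfrak{d}}$. Since $\psi$ is a homomorphism on $U$, this yields
\[
\sum_{\substack{a\in U\\ a\equiv r \pmod{\mathfrak{d}}}}\psi(a)=\sum_{h\in H_{\mathfrak{d}}}\psi(rh)=\psi(r)\sum_{h\in H_{\mathfrak{d}}}\psi(h).
\]
Next, restricting $\psi$ to the finite abelian group $H_{\mathfrak{d}}$ and invoking the orthogonality relations, the inner sum equals $|H_{\mathfrak{d}}|$ if $\psi$ is trivial on $H_{\mathfrak{d}}$ and $0$ otherwise; so the whole sum is nonzero exactly when $\psi|_{H_{\mathfrak{d}}}\equiv 1$.

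Finally I would identify this last condition with $\mathfrak{d}$ being an induced modulus for $\psi$: if $a\equiv b\pmod{\mathfrak{d}}$ with $a,b\in U$ then $ab^{-1}\in H_{\mathfrak{d}}$, so $\psi|_{H_{\mathfrak{d}}}\equiv 1$ forces $\psi(a)=\psi(b)$, while conversely taking $b=1$ shows that an induced modulus $\mathfrak{d}$ makes $\psi$ trivial on $H_{\mathfrak{d}}$. Since $\psi$ is primitive modulo $\mathfrak{n}$, it has no induced modulus other than $\mathfrak{n}$ itself, so the sum vanishes for every $\mathfrak{d}\neq\mathfrak{n}$; and when $\mathfrak{d}=\mathfrak{n}$ the group $H_{\mathfrak{n}}$ is trivial, the coset is $\{r\}$, and the sum equals $\psi(r)$, as claimed. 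I do not expect a genuine obstacle here; the only point requiring care is the bookkeeping in the last step — correctly matching the ideal-theoretic notion of an induced modulus with the group-theoretic statement $H_{\mathfrak{d}}\subseteq\ker\psi$, and making sure the orthogonality relation is applied to the subgroup $H_{\mathfrak{d}}$ rather than to all of $U$. (An alternative that avoids orthogonality: if $\mathfrak{d}\neq\mathfrak{n}$ is not an induced modulus, choose $t\in H_{\mathfrak{d}}$ with $\psi(t)\neq 1$; multiplication by $t$ permutes the coset $rH_{\mathfrak{d}}$, so the sum $S$ satisfies $S=\psi(t)S$, whence $S=0$.)
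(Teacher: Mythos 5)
Your argument is correct and complete. Note that the paper does not actually prove this lemma: it is imported verbatim from Wang--Ji \cite{wj1}, so there is no in-paper proof to compare against. Your route --- identifying the index set with the coset $rH_{\mathfrak{d}}$ of the kernel $H_{\mathfrak{d}}$ of $(\mathcal{O}_K/\mathfrak{n})^{*}\to(\mathcal{O}_K/\mathfrak{d})^{*}$, factoring out $\psi(r)$, applying orthogonality to $\psi|_{H_{\mathfrak{d}}}$, and then matching ``$\psi$ trivial on $H_{\mathfrak{d}}$'' with ``$\mathfrak{d}$ is an induced modulus'' so that primitivity forces $\mathfrak{d}=\mathfrak{n}$ --- is exactly the standard proof, and the one delicate step (the two-way translation between the congruence definition of induced modulus and the subgroup condition $H_{\mathfrak{d}}\subseteq\ker\psi$, using that $r$ is a unit so the coset is nonempty and $\psi(r)\neq 0$) is handled correctly; the shift-by-$t$ variant you mention is an equally valid substitute for orthogonality.
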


\section{Proof of Theorem \ref{MAIN_TH1}}
Let $K$ be an algebraic number field with ring of integers $\mathcal{O}_K$ and $\mathfrak{m}, \mathfrak{n}$  two nonzero ideals in $\mathcal{O}_K$. For every integer $k\geq 1$, we define the following more general function than $\varphi_k(\mathfrak{n})$: $$\varphi_{k}(\mathfrak{n}, \mathfrak{m}) := \sum_{\substack{a_1,a_2,\ldots, a_k \in (\mathcal{O}_{K}/\mathfrak{n})^{*} \\ a_1+a_2+\cdots+a_k \in (\mathcal{O}_{K}/\mathfrak{m})^{*}}} 1.$$ If $\mathfrak{m}=\mathfrak{n}$, then clearly $\varphi_{k}(\mathfrak{n}, \mathfrak{n})=\varphi_k(\mathfrak{n})$. To prove Theorem \ref{MAIN_TH1}, we need the following recursion lemma.

\medskip

\begin{lemma}\label{prec}
Let $k\geq 2$ and $\mathfrak{m} \mid \mathfrak{n}$. Then $$\varphi_{k}(\mathfrak{n}, \mathfrak{m}) = \varphi(\mathfrak{n}) \sum_{\mathfrak{d}\mid \mathfrak{m}}\frac{\mu(\mathfrak{d})}{\varphi(\mathfrak{d})}\varphi_{k-1}(\mathfrak{n}, \mathfrak{d}).$$ 
\end{lemma}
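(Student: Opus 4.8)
The plan is to condition on which of the variables $a_1,\dots,a_k$ the "sum is a unit mod $\mathfrak{m}$" condition actually constrains, and then split off the last variable $a_k$. First I would write
\[
\varphi_k(\mathfrak{n},\mathfrak{m})=\sum_{\substack{a_1,\dots,a_{k-1}\in(\mathcal{O}_K/\mathfrak{n})^*}}\ \sum_{\substack{a_k\in(\mathcal{O}_K/\mathfrak{n})^*\\ a_k\equiv -(a_1+\cdots+a_{k-1})\ (\ast)}}1,
\]
where $(\ast)$ abbreviates the condition that $a_1+\cdots+a_k$ be a unit mod $\mathfrak{m}$, i.e.\ $(a_1+\cdots+a_k,\mathfrak{m})=\mathcal{O}_K$. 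The natural device is Möbius inversion over $\mathfrak{m}$: for $s:=a_1+\cdots+a_k$ one has $\sum_{\mathfrak{d}\mid(\langle s\rangle,\mathfrak{m})}\mu(\mathfrak{d})=[\,(s,\mathfrak{m})=\mathcal{O}_K\,]$, so
\[
\varphi_k(\mathfrak{n},\mathfrak{m})=\sum_{\mathfrak{d}\mid\mathfrak{m}}\mu(\mathfrak{d})\sum_{\substack{a_1,\dots,a_k\in(\mathcal{O}_K/\mathfrak{n})^*\\ \mathfrak{d}\mid a_1+\cdots+a_k}}1.
\]

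Next I would evaluate the inner sum by fixing $a_1,\dots,a_{k-1}$ and counting $a_k$. Since $\mathfrak{d}\mid\mathfrak{m}\mid\mathfrak{n}$, the count of $a_k\in(\mathcal{O}_K/\mathfrak{n})^*$ with $a_k\equiv -(a_1+\cdots+a_{k-1})\pmod{\mathfrak{d}}$ is governed by Lemma~\ref{1}: it equals $\varphi(\mathfrak{n})/\varphi(\mathfrak{d})$ provided the residue $-(a_1+\cdots+a_{k-1})$ is coprime to $\mathfrak{d}$, i.e.\ provided $a_1+\cdots+a_{k-1}$ is a unit mod $\mathfrak{d}$, and $0$ otherwise. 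Hence
\[
\sum_{\substack{a_1,\dots,a_k\in(\mathcal{O}_K/\mathfrak{n})^*\\ \mathfrak{d}\mid a_1+\cdots+a_k}}1
=\frac{\varphi(\mathfrak{n})}{\varphi(\mathfrak{d})}\sum_{\substack{a_1,\dots,a_{k-1}\in(\mathcal{O}_K/\mathfrak{n})^*\\ a_1+\cdots+a_{k-1}\in(\mathcal{O}_K/\mathfrak{d})^*}}1
=\frac{\varphi(\mathfrak{n})}{\varphi(\mathfrak{d})}\,\varphi_{k-1}(\mathfrak{n},\mathfrak{d}).
\]
Substituting this into the Möbius expansion gives exactly
\[
\varphi_k(\mathfrak{n},\mathfrak{m})=\varphi(\mathfrak{n})\sum_{\mathfrak{d}\mid\mathfrak{m}}\frac{\mu(\mathfrak{d})}{\varphi(\mathfrak{d})}\varphi_{k-1}(\mathfrak{n},\mathfrak{d}),
\]
as claimed. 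Note the hypothesis $k\ge2$ is used so that the remaining block $a_1,\dots,a_{k-1}$ is nonempty and $\varphi_{k-1}$ makes sense.

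The only mildly delicate points are bookkeeping rather than substance: one must check that $\mathfrak{d}\mid\mathfrak{m}\mid\mathfrak{n}$ is exactly the divisibility needed to apply Lemma~\ref{1} with modulus $\mathfrak{d}$ and ambient ring $\mathcal{O}_K/\mathfrak{n}$, and that the Möbius identity $\sum_{\mathfrak{d}\mid(\langle s\rangle,\mathfrak{m})}\mu(\mathfrak{d})$ correctly detects coprimality of $s$ to $\mathfrak{m}$ (this is the standard property of $\mu$ on $I(\mathcal{O}_K)$, since $\mu\ast \mathbf{1}$ is the indicator of $\mathcal{O}_K$). I do not anticipate a real obstacle; the argument is a clean one-variable peeling combined with inclusion–exclusion, and interchanging the two finite sums is unproblematic.
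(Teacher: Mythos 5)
Your proposal is correct and follows essentially the same route as the paper: Möbius inversion over $\mathfrak{m}$ to detect the condition $(a_1+\cdots+a_k,\mathfrak{m})=\mathcal{O}_K$, interchange of summation, peeling off $a_k$, and applying Lemma~\ref{1} to count the solutions of the congruence, which produces the factor $\varphi(\mathfrak{n})/\varphi(\mathfrak{d})$ together with the condition $a_1+\cdots+a_{k-1}\in(\mathcal{O}_K/\mathfrak{d})^*$ defining $\varphi_{k-1}(\mathfrak{n},\mathfrak{d})$. No gaps; this matches the paper's argument step for step.
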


\begin{proof}
We have
\begin{eqnarray*}
\varphi_{k}(\mathfrak{n}, \mathfrak{m}) = \sum_{\substack{a_1,a_2,\ldots, a_k \in (\mathcal{O}_{K}/\mathfrak{n})^{*} \\ a_1+a_2+\cdots+a_k \in (\mathcal{O}_{K}/\mathfrak{m})^{*}}} 1 &=& \sum_{\substack{a_1,a_2,\ldots, a_k \in (\mathcal{O}_{K}/\mathfrak{n})^{*}}} \sum_{\mathfrak{d}\mid (a_1+a_2+\cdots+a_k, \mathfrak{m})} \mu(\mathfrak{d})\\
&=& \sum_{\mathfrak{d}\mid \mathfrak{m}}  \mu(\mathfrak{d}) \sum_{\substack{a_1,a_2,\ldots, a_k \in (\mathcal{O}_{K}/\mathfrak{n})^{*}\\a_1+a_2+\cdots+ a_k \equiv 0 \pmod{\mathfrak{d}}}}1\\
&=& \sum_{\mathfrak{d}\mid \mathfrak{m}}  \mu(\mathfrak{d})\sum_{\substack{a_1,a_2,\ldots, a_{k-1} \in (\mathcal{O}_{K}/\mathfrak{n})^{*}}} \sum_{\substack{a_k \in (\mathcal{O}_{K}/\mathfrak{n})^{*}\\a_k \equiv -a_1-a_2-\cdots- a_{k-1} \pmod{\mathfrak{d}}}}1
\end{eqnarray*}
By using Lemma \ref{1} we get,
\begin{eqnarray*}
\varphi_{k}(\mathfrak{n}, \mathfrak{m}) &=& \sum_{\mathfrak{d}\mid \mathfrak{m}}  \mu(\mathfrak{d})\sum_{\substack{a_1,a_2,\ldots, a_{k-1} \in (\mathcal{O}_{K}/\mathfrak{n})^{*}\\a_1+a_2+\cdots+ a_{k-1}\in (\mathcal{O}_{K}/\mathfrak{d})^{*}}}\frac{\varphi(\mathfrak{n})}{\varphi(\mathfrak{d})}\\
&=&\varphi(\mathfrak{n})\sum_{\mathfrak{d}\mid \mathfrak{m}}\frac{\mu(\mathfrak{d})}{\varphi(\mathfrak{d})} \sum_{\substack{a_1,a_2,\ldots, a_{k-1} \in (\mathcal{O}_{K}/\mathfrak{n})^{*}\\a_1+a_2+\cdots+ a_{k-1}\in (\mathcal{O}_{K}/\mathfrak{d})^{*}}}1\\
&=& \varphi(\mathfrak{n})\sum_{\mathfrak{d}\mid \mathfrak{m}}\frac{\mu(\mathfrak{d})}{\varphi(\mathfrak{d})}\varphi_{k-1}(\mathfrak{n}, \mathfrak{d}).
\end{eqnarray*}
\end{proof}
{\noindent \bf Proof of Theorem \ref{MAIN_TH1}.}
Let $k\geq 1$ and $\mathfrak{m}$ and $\mathfrak{n}$ be two nonzero ideals of $\mathcal{O}_K$ such that $\mathfrak{m} \mid \mathfrak{n}$. Then we show the following more general result $$\varphi_{k}(\mathfrak{n},\mathfrak{m}) = \varphi(\mathfrak{n})^k \prod_{\mathfrak{p} \mid \mathfrak{m}} \left(1-\frac{1}{N(\mathfrak{p})-1}+\frac{1}{(N(\mathfrak{p})-1)^2}+\cdots+ \frac{(-1)^{k-1}}{(N(\mathfrak{p})-1)^{k-1}}\right),$$
by induction on $k$. If $k=1$, then $\varphi_{1}(\mathfrak{n},\mathfrak{m})=\varphi(\mathfrak{n})$, by the definition. Let $k \geq 2$, and assume that the result is true for $k-1$. Using Lemma \ref{prec}, we have
\begin{eqnarray*}
\varphi_{k}(\mathfrak{n}, \mathfrak{m}) &=& \varphi(\mathfrak{n}) \sum_{\mathfrak{d}\mid \mathfrak{m}}\frac{\mu(\mathfrak{d})}{\varphi(\mathfrak{d})}\varphi_{k-1}(\mathfrak{n}, \mathfrak{d})\\
&=& \varphi(\mathfrak{n}) \sum_{\mathfrak{d}\mid \mathfrak{m}}\frac{\mu(\mathfrak{d})}{\varphi(\mathfrak{d})} \varphi(\mathfrak{n})^{k-1}\prod_{\mathfrak{p} \mid \mathfrak{d}} \left(1-\frac{1}{N(\mathfrak{p})-1}+\frac{1}{(N(\mathfrak{p})-1)^2}+\cdots+ \frac{(-1)^{k-2}}{(N(\mathfrak{p})-1)^{k-2}}\right)\\
&=& \varphi(\mathfrak{n})^k \prod_{\mathfrak{p} \mid \mathfrak{m}}1+\frac{\mu(\mathfrak{p})}{\varphi(\mathfrak{p})} \left(1-\frac{1}{N(\mathfrak{p})-1}+\frac{1}{(N(\mathfrak{p})-1)^2}+\cdots+ \frac{(-1)^{k-2}}{(N(\mathfrak{p})-1)^{k-2}}\right)\\
&=& \varphi(\mathfrak{n})^k \prod_{\mathfrak{p} \mid \mathfrak{m}}\left(1-\frac{1}{N(\mathfrak{p})-1}+\frac{1}{(N(\mathfrak{p})-1)^2}+\cdots+ \frac{(-1)^{k-1}}{(N(\mathfrak{p})-1)^{k-1}}\right),
\end{eqnarray*} 
which proves the result.

Now if we take $\mathfrak{m}=\mathfrak{n}$, then we have the proof of Theorem \ref{MAIN_TH1}. $\hfill\Box$

\section{Proof of Theorem \ref{MAIN-TH}}
Let $K$ be an algebraic number field with ring of integers $\mathcal{O}_K$ and $\mathfrak{n}$ a nonzero ideal in $\mathcal{O}_K$. We are given that $\chi$ is a Dirichlet character modulo $\mathfrak{n}$ with conductor $\mathfrak{d}$ and $\psi$ is the primitive character modulo $\mathfrak{d}$ that induces $\chi$. Let $a_1 \in (\mathcal{O}_{K}/\mathfrak{n})^{*}$ and $u \in (\mathcal{O}_{K}/\mathfrak{d})^{*}$ be such that $a_1\equiv u \pmod{\mathfrak{d}}$. Since $\psi$ is the primitive character modulo $\mathfrak{d}$ that induces $\chi$, we have $\chi(a_1)=\psi(a_1)=\psi(u)$. Let $M_{\chi,k}(\mathfrak{n})$ denote the sum in left hand side of the equation \eqref{meq}. Then
\begin{eqnarray*}
M_{\chi,k}(\mathfrak{n})&=&\sum_{\substack{ a_1, a_2,\ldots, a_k \in (\mathcal{O}_{K}/\mathfrak{n})^* \\ a_1+a_2+\cdots+a_k \in (\mathcal{O}_{K}/\mathfrak{n})^*\\ b_1,b_2,\ldots, b_s \in \mathcal{O}_{K}/\mathfrak{n}}} f(\gcd(a_1+a_2+\cdots+a_k-r,b_1,b_2,\ldots, b_s,\mathfrak{n}))\chi(a_1)\nonumber\\
&=& \sum_{u \in (\mathcal{O}_{K}/\mathfrak{d})^{*}}\psi(u)\sum_{\substack{ a_1, a_2,\ldots, a_k  \in (\mathcal{O}_{K}/\mathfrak{n})^*\\a_1+a_2+\cdots+a_k \in (\mathcal{O}_{K}/\mathfrak{n})^*\\a_1\equiv u \pmod{\mathfrak{d}}\\ b_1,b_2,\ldots, b_s \in \mathcal{O}_{K}/\mathfrak{n}}} f(\gcd(a_1+a_2+\cdots+a_k-r,b_1,b_2,\ldots, b_s,\mathfrak{n}))\nonumber
\end{eqnarray*}
By using the convolution identity $f=(\mu \ast f)\ast {\bf 1}$, we get
\begin{eqnarray}\label{sum}
M_{\chi,k}(\mathfrak{n})&=&\sum_{u \in (\mathcal{O}_{K}/\mathfrak{d})^{*}}\psi(u)\sum_{\substack{ a_1, a_2,\ldots, a_k  \in (\mathcal{O}_{K}/\mathfrak{n})^*\\a_1+a_2+\cdots+a_k \in (\mathcal{O}_{K}/\mathfrak{n})^*\\a_1\equiv u \pmod{\mathfrak{d}}\\ b_1,b_2,\ldots, b_s \in \mathcal{O}_{K}/\mathfrak{n}}}\sum_{\mathfrak{e}\mid (a_1+a_2+\cdots+a_k-r,b_1,b_2,\ldots, b_s,\mathfrak{n})} (\mu \ast f)(\mathfrak{e})\nonumber\\
&=& \sum_{\substack{\mathfrak{e}\mid \mathfrak{n}}}(\mu \ast f)(\mathfrak{e})\sum_{\substack{b_1,b_2,\ldots, b_s \in \mathcal{O}_{K}/\mathfrak{n}\\\mathfrak{e} \mid b_1,\ldots, \mathfrak{e} \mid b_s}}\;\;\sum_{u \in (\mathcal{O}_{K}/\mathfrak{d})^{*}}\psi(u)\sum_{\substack{ a_1, a_2,\ldots, a_k  \in (\mathcal{O}_{K}/\mathfrak{n})^*\\a_1+a_2+\cdots+a_k \in (\mathcal{O}_{K}/\mathfrak{n})^*\\a_1\equiv u \pmod{\mathfrak{d}}\\a_1+a_2+\cdots+a_k \equiv r \pmod{\mathfrak{e}}}}1\nonumber\\
&=&\sum_{\substack{\mathfrak{e}\mid \mathfrak{n}}}(\mu \ast f)(\mathfrak{e})\sum_{\substack{b_1,b_2,\ldots, b_s \in \mathcal{O}_{K}/\mathfrak{n}\\\mathfrak{e} \mid b_1,\ldots, \mathfrak{e} \mid b_s}}\;\; \sum_{u \in (\mathcal{O}_{K}/\mathfrak{d})^{*}}\psi(u)\sum_{\substack{ a_1, a_2,\ldots, a_k  \in (\mathcal{O}_{K}/\mathfrak{n})^*\\a_1\equiv u \pmod{\mathfrak{d}}\\a_1+a_2+\cdots+a_k \equiv r \pmod{\mathfrak{e}}}}\sum_{\mathfrak{g}\mid(a_1+a_2+\cdots+a_k, \mathfrak{n})}\mu(\mathfrak{g})\nonumber\\
&=&\sum_{\substack{\mathfrak{e}\mid \mathfrak{n}}}(\mu \ast f)(\mathfrak{e})\sum_{\substack{b_1,b_2,\ldots, b_s \in \mathcal{O}_{K}/\mathfrak{n}\\\mathfrak{e} \mid b_1,\ldots, \mathfrak{e} \mid b_s}}\;\;\sum_{u \in (\mathcal{O}_{K}/\mathfrak{d})^{*}}\psi(u)\;\;\sum_{\mathfrak{g}\mid \mathfrak{n}} \mu(\mathfrak{g})\sum_{\substack{ a_1, a_2,\ldots, a_k  \in (\mathcal{O}_{K}/\mathfrak{n})^*\\a_1\equiv u \pmod{\mathfrak{d}}\\a_1+a_2+\cdots+a_k \equiv r \pmod{\mathfrak{e}}\\a_1+a_2+\cdots+a_k \equiv 0 \pmod{\mathfrak{g}}}}1 \nonumber\\
&=&\sum_{\substack{\mathfrak{e}\mid \mathfrak{n}}}(\mu \ast f)(\mathfrak{e})\sum_{\substack{b_1,b_2,\ldots, b_s \in \mathcal{O}_{K}/\mathfrak{n}\\\mathfrak{e} \mid b_1,\ldots, \mathfrak{e} \mid b_s}}\;\;\sum_{\mathfrak{g}\mid \mathfrak{n}} \mu(\mathfrak{g})\sum_{u \in (\mathcal{O}_{K}/\mathfrak{d})^{*}}\psi(u)N_k(\mathfrak{n}, \mathfrak{e}, \mathfrak{g}, \mathfrak{d},u)
\end{eqnarray}

where $N_k(\mathfrak{n}, \mathfrak{e}, \mathfrak{g}, \mathfrak{d},u)=\displaystyle \sum_{\substack{ a_1, a_2,\ldots, a_k  \in (\mathcal{O}_{K}/\mathfrak{n})^*\\a_1\equiv u \pmod{\mathfrak{d}}\\a_1+a_2+\cdots+a_k \equiv r \pmod{\mathfrak{e}}\\a_1+a_2+\cdots+a_k \equiv 0 \pmod{\mathfrak{g}}}}1$.

\medskip

We now evaluate the sum $N_k(\mathfrak{n}, \mathfrak{e}, \mathfrak{g}, \mathfrak{d},u)$, where $\mathfrak{e}\mid \mathfrak{n}$, $\mathfrak{g}\mid \mathfrak{n}$ and $u \in (\mathcal{O}_{K}/\mathfrak{d})^{*}$ be fixed. Since $(r, \mathfrak{n})=1$, if $(\mathfrak{e}, \mathfrak{g}) >1$, then $N_k(\mathfrak{n}, \mathfrak{e}, \mathfrak{g}, \mathfrak{d},u)=0$, the empty sum. Hence we assume that $(\mathfrak{e}, \mathfrak{g}) =1$.

If $k=1$, using Lemma \ref{2}, we get
\begin{equation}\label{k=1}
N_1(\mathfrak{n}, \mathfrak{e}, \mathfrak{g}, \mathfrak{d},u)=\sum_{\substack{ a_1 \in (\mathcal{O}_{K}/\mathfrak{n})^*\\a_1\equiv u \pmod{\mathfrak{d}}\\a_1 \equiv r \pmod{\mathfrak{e}}\\a_1 \equiv 0 \pmod{\mathfrak{g}}}}1=
\begin{cases}
\frac{\varphi(\mathfrak{n})}{\varphi([\mathfrak{d},\mathfrak{e}])} & \text{ if } \mathfrak{g}=1, (u, \mathfrak{d})=(r, \mathfrak{e})=1 \text{ and } u \equiv r \pmod{(\mathfrak{d}, \mathfrak{e})} \\
0 & \text{ otherwise. }
\end{cases}
\end{equation}

%\begin{cases}
%\frac{\varphi(\mathfrak{n})}{\varphi([\mathfrak{d},\mathfrak{e}])} 
%& \text{ if } \mathfrak{g}=1, (u, \mathfrak{d})=(r, \mathfrak{e})=1 \text{ and } u \equiv r \pmod{(\mathfrak{d}, \mathfrak{e})} \\
%0 & \text{ otherwise. }
%\end{cases}

\smallskip

\begin{lemma}(Recursion formula for $N_k(\mathfrak{n}, \mathfrak{e}, \mathfrak{g}, \mathfrak{d},u)$)\label{rec}
Let $\mathfrak{e}$ and $\mathfrak{g}$ be two ideals in $\mathcal{O}_K$ such that $\mathfrak{e}\mid \mathfrak{n}$, $\mathfrak{g}\mid \mathfrak{n}$ with $(\mathfrak{e}, \mathfrak{g}) =1$. Then for every integer $k\geq 2$ and an element $u \in (\mathcal{O}_{K}/\mathfrak{d})^{*}$, we have $$N_k(\mathfrak{n}, \mathfrak{e}, \mathfrak{g}, \mathfrak{d},u)=\frac{\varphi(\mathfrak{n})}{\varphi(\mathfrak{e})\varphi(\mathfrak{g})}\sum_{\mathfrak{j}\mid \mathfrak{e}}\mu(\mathfrak{j})\sum_{\mathfrak{t}\mid \mathfrak{g}}\mu(\mathfrak{t})N_{k-1}(\mathfrak{n}, \mathfrak{j}, \mathfrak{t}, \mathfrak{d},u).$$
\end{lemma}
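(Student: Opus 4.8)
The plan is to peel off the last variable $a_k$, sum over it explicitly by Lemma \ref{2}, and then reintroduce copies of $N_{k-1}$ by detecting the leftover coprimality conditions with the M\"obius function on $I(\mathcal{O}_K)$.

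First I would fix $a_1,\ldots,a_{k-1}\in(\mathcal{O}_K/\mathfrak{n})^*$ with $a_1\equiv u\pmod{\mathfrak{d}}$ and set $S=a_1+a_2+\cdots+a_{k-1}$. With these fixed, the conditions on $a_k$ are exactly $a_k\in(\mathcal{O}_K/\mathfrak{n})^*$, $a_k\equiv r-S\pmod{\mathfrak{e}}$ and $a_k\equiv -S\pmod{\mathfrak{g}}$. Applying Lemma \ref{2} with the two moduli $\mathfrak{e},\mathfrak{g}$ and the residues $r-S,-S$: since $(\mathfrak{e},\mathfrak{g})=\mathcal{O}_K$ the compatibility congruence $r-S\equiv -S\pmod{(\mathfrak{e},\mathfrak{g})}$ holds vacuously, so the sum over $a_k$ equals $\varphi(\mathfrak{n})/\varphi([\mathfrak{e},\mathfrak{g}])=\varphi(\mathfrak{n})/(\varphi(\mathfrak{e})\varphi(\mathfrak{g}))$ when $(r-S,\mathfrak{e})=1$ and $(S,\mathfrak{g})=1$, and $0$ otherwise (here I use $[\mathfrak{e},\mathfrak{g}]=\mathfrak{e}\mathfrak{g}$ and the multiplicativity of $\varphi$). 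Thus
\[
N_k(\mathfrak{n},\mathfrak{e},\mathfrak{g},\mathfrak{d},u)=\frac{\varphi(\mathfrak{n})}{\varphi(\mathfrak{e})\varphi(\mathfrak{g})}\sum_{\substack{a_1,\ldots,a_{k-1}\in(\mathcal{O}_K/\mathfrak{n})^*\\ a_1\equiv u\pmod{\mathfrak{d}}\\ (r-S,\mathfrak{e})=1,\ (S,\mathfrak{g})=1}}1.
\]

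Next I would write $\sum_{\mathfrak{j}\mid(r-S,\mathfrak{e})}\mu(\mathfrak{j})$ and $\sum_{\mathfrak{t}\mid(S,\mathfrak{g})}\mu(\mathfrak{t})$ for the indicator functions of the two coprimality conditions, insert them, and interchange the order of summation. Since $\mathfrak{j}\mid(r-S,\mathfrak{e})$ is equivalent to ``$\mathfrak{j}\mid\mathfrak{e}$ and $S\equiv r\pmod{\mathfrak{j}}$'', and $\mathfrak{t}\mid(S,\mathfrak{g})$ to ``$\mathfrak{t}\mid\mathfrak{g}$ and $S\equiv 0\pmod{\mathfrak{t}}$'', this gives
\[
N_k(\mathfrak{n},\mathfrak{e},\mathfrak{g},\mathfrak{d},u)=\frac{\varphi(\mathfrak{n})}{\varphi(\mathfrak{e})\varphi(\mathfrak{g})}\sum_{\mathfrak{j}\mid\mathfrak{e}}\mu(\mathfrak{j})\sum_{\mathfrak{t}\mid\mathfrak{g}}\mu(\mathfrak{t})\sum_{\substack{a_1,\ldots,a_{k-1}\in(\mathcal{O}_K/\mathfrak{n})^*\\ a_1\equiv u\pmod{\mathfrak{d}}\\ S\equiv r\pmod{\mathfrak{j}},\ S\equiv 0\pmod{\mathfrak{t}}}}1,
\]
and the innermost sum is $N_{k-1}(\mathfrak{n},\mathfrak{j},\mathfrak{t},\mathfrak{d},u)$ by the definition of $N_{k-1}$ (for $k\ge 2$ this is meaningful, with the base case $k-1=1$ given by \eqref{k=1}; note $\mathfrak{j}\mid\mathfrak{e}\mid\mathfrak{n}$, $\mathfrak{t}\mid\mathfrak{g}\mid\mathfrak{n}$, and $(\mathfrak{j},\mathfrak{t})=1$ automatically from $(\mathfrak{e},\mathfrak{g})=1$). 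This is precisely the claimed recursion.

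The argument is mostly bookkeeping. The only place requiring genuine care is the use of Lemma \ref{2}: one must verify that its hypotheses produce exactly the conditions $(r-S,\mathfrak{e})=1$ and $(S,\mathfrak{g})=1$ — in particular that the compatibility congruence drops out because $\mathfrak{e}$ and $\mathfrak{g}$ are coprime — and then match the congruences $S\equiv r\pmod{\mathfrak{j}}$, $S\equiv 0\pmod{\mathfrak{t}}$ produced by the M\"obius sums with the defining conditions of $N_{k-1}$. No deeper input is needed.
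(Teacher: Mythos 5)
Your proposal is correct and follows essentially the same route as the paper's proof: peel off $a_k$, apply Lemma \ref{2} (with the compatibility congruence vacuous since $(\mathfrak{e},\mathfrak{g})=1$ and $\varphi([\mathfrak{e},\mathfrak{g}])=\varphi(\mathfrak{e})\varphi(\mathfrak{g})$), then detect the two coprimality conditions on $S=a_1+\cdots+a_{k-1}$ with M\"obius sums and interchange the order of summation to recover $N_{k-1}(\mathfrak{n},\mathfrak{j},\mathfrak{t},\mathfrak{d},u)$. Your explicit remark that the compatibility congruence in Lemma \ref{2} drops out is a point the paper leaves implicit, but the argument is otherwise identical.
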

\begin{proof}
We have
\begin{eqnarray*}
N_k(\mathfrak{n}, \mathfrak{e}, \mathfrak{g}, \mathfrak{d},u)= \sum_{\substack{ a_1, a_2,\ldots, a_k  \in (\mathcal{O}_{K}/\mathfrak{n})^*\\a_1\equiv u \pmod{\mathfrak{d}}\\a_1+a_2+\cdots+a_k \equiv r \pmod{\mathfrak{e}}\\a_1+a_2+\cdots+a_k \equiv 0 \pmod{\mathfrak{g}}}}1 &=& \sum_{\substack{ a_1, a_2,\ldots, a_{k-1}  \in (\mathcal{O}_{K}/\mathfrak{n})^*\\a_1\equiv u \pmod{\mathfrak{d}}}}\sum_{\substack{a_k  \in (\mathcal{O}_{K}/\mathfrak{n})^*\\a_k \equiv r-a_1-a_2-\cdots -a_{k-1} \pmod{\mathfrak{e}}\\a_k \equiv -a_1-a_2-\cdots -a_{k-1} \pmod{\mathfrak{g}}}}1.
\end{eqnarray*}

\smallskip

Since $(\mathfrak{e}, \mathfrak{g}) =1$, using Lemma \ref{2}, we get
\begin{eqnarray*}
N_k(\mathfrak{n}, \mathfrak{e}, \mathfrak{g}, \mathfrak{d},u)&=& \sum_{\substack{ a_1, a_2,\ldots, a_{k-1}  \in (\mathcal{O}_{K}/\mathfrak{n})^*\\a_1\equiv u \pmod{\mathfrak{d}}\\(a_1+a_2+\cdots +a_{k-1}-r, \mathfrak{e})=1\\(a_1+a_2+\cdots +a_{k-1}, \mathfrak{g})=1}}\frac{\varphi(\mathfrak{n})}{\varphi([\mathfrak{e},\mathfrak{g}])}\\
&=& \frac{\varphi(\mathfrak{n})}{\varphi(\mathfrak{e}) \varphi(\mathfrak{g})}\sum_{\substack{ a_1, a_2,\ldots, a_{k-1}  \in (\mathcal{O}_{K}/\mathfrak{n})^*\\a_1\equiv u \pmod{\mathfrak{d}}}}\;\;\sum_{\mathfrak{j}\mid (a_1+a_2+\cdots +a_{k-1}-r, \mathfrak{e})}\mu(\mathfrak{j}) \sum_{\mathfrak{t}\mid (a_1+a_2+\cdots +a_{k-1}, \mathfrak{g})} \mu(\mathfrak{t})\\
&=& \frac{\varphi(\mathfrak{n})}{\varphi(\mathfrak{e}) \varphi(\mathfrak{g})} \sum_{\mathfrak{j}\mid \mathfrak{e}} \mu(\mathfrak{j})\;\;\sum_{\mathfrak{t}\mid \mathfrak{g}} \mu(\mathfrak{t}) \sum_{\substack{ a_1, a_2,\ldots, a_{k-1}  \in (\mathcal{O}_{K}/\mathfrak{n})^*\\a_1\equiv u \pmod{\mathfrak{d}}\\a_1+a_2+\cdots+a_{k-1} \equiv r \pmod{\mathfrak{j}}\\a_1+a_2+\cdots+a_{k-1} \equiv 0 \pmod{\mathfrak{t}}}}1\\
&=& \frac{\varphi(\mathfrak{n})}{\varphi(\mathfrak{e}) \varphi(\mathfrak{g})} \sum_{\mathfrak{j}\mid \mathfrak{e}} \mu(\mathfrak{j})\;\;\sum_{\mathfrak{t}\mid \mathfrak{g}} \mu(\mathfrak{t})N_{k-1}(\mathfrak{n}, \mathfrak{j}, \mathfrak{t}, \mathfrak{d},u).
\end{eqnarray*}
\end{proof}

\begin{lemma}\label{prodprime}
Let $\mathfrak{e}$ and $\mathfrak{g}$ be two nonzero ideals in $\mathcal{O}_K$ such that $\mathfrak{e}\mid \mathfrak{n}$, $\mathfrak{g}\mid \mathfrak{n}$ with $(\mathfrak{e}, \mathfrak{g}) =1$. Then for every integer $k \geq 2$, 
\begin{multline*}
\sum_{u \in (\mathcal{O}_{K}/\mathfrak{d})^{*}}\psi(u)N_k(\mathfrak{n}, \mathfrak{e}, \mathfrak{g}, \mathfrak{d},u)=
\frac{\varphi(\mathfrak{n})^k \psi(r)\mu(\mathfrak{d})^{k-1}}{\varphi(\mathfrak{e})\varphi(\mathfrak{g})}\prod_{\mathfrak{p}\mid \mathfrak{d}}\frac{1}{(N(\mathfrak{p})-1)^{k-1}}\\ \prod_{\substack{\mathfrak{p}\mid \mathfrak{e}\\\mathfrak{p}\nmid \mathfrak{d}}}\left(1- \frac{1}{N(\mathfrak{p})-1}+\cdots+\frac{(-1)^{k-1}}{(N(\mathfrak{p})-1)^{k-1}} \right) \prod_{\substack{\mathfrak{p}\mid \mathfrak{g}}}\left(1- \frac{1}{N(\mathfrak{p})-1}+\cdots+\frac{(-1)^{k-2}}{(N(\mathfrak{p})-1)^{k-2}} \right),
\end{multline*}
if $\mathfrak{d}\mid \mathfrak{e}$. Otherwise, the sum is 0.
\end{lemma}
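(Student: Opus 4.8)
The plan is to evaluate the displayed sum by induction on $k$, carrying along the auxiliary function
\[
S_k(\mathfrak{e},\mathfrak{g}) := \sum_{u \in (\mathcal{O}_{K}/\mathfrak{d})^{*}}\psi(u)\,N_k(\mathfrak{n},\mathfrak{e},\mathfrak{g},\mathfrak{d},u),
\]
which is precisely the quantity we must compute. Since the recursion in Lemma~\ref{rec} leaves the slots $\mathfrak{d}$ and $u$ untouched, multiplying it by $\psi(u)$ and summing over $u\in(\mathcal{O}_{K}/\mathfrak{d})^{*}$ yields, for $k\geq 2$,
\[
S_k(\mathfrak{e},\mathfrak{g})=\frac{\varphi(\mathfrak{n})}{\varphi(\mathfrak{e})\varphi(\mathfrak{g})}\sum_{\mathfrak{j}\mid\mathfrak{e}}\mu(\mathfrak{j})\sum_{\mathfrak{t}\mid\mathfrak{g}}\mu(\mathfrak{t})\,S_{k-1}(\mathfrak{j},\mathfrak{t}),
\]
and every pair $(\mathfrak{j},\mathfrak{t})$ appearing here is coprime since $(\mathfrak{j},\mathfrak{t})\mid(\mathfrak{e},\mathfrak{g})=1$, so the relation may be iterated.

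First I would dispose of the case $k=1$. Plugging the evaluation \eqref{k=1} of $N_1$ into the definition of $S_1$ forces $\mathfrak{g}=\mathcal{O}_{K}$ and reduces the inner sum to $\sum\psi(u)$ over $u\in(\mathcal{O}_{K}/\mathfrak{d})^{*}$ with $u\equiv r\pmod{(\mathfrak{d},\mathfrak{e})}$. By Lemma~\ref{chr}, applied to the primitive character $\psi$ modulo $\mathfrak{d}$, this sum vanishes unless $(\mathfrak{d},\mathfrak{e})=\mathfrak{d}$, i.e.\ $\mathfrak{d}\mid\mathfrak{e}$, in which case it equals $\psi(r)$ and $[\mathfrak{d},\mathfrak{e}]=\mathfrak{e}$. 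Hence $S_1(\mathfrak{e},\mathfrak{g})=\varphi(\mathfrak{n})\psi(r)/\varphi(\mathfrak{e})$ if $\mathfrak{g}=\mathcal{O}_{K}$ and $\mathfrak{d}\mid\mathfrak{e}$, and $S_1(\mathfrak{e},\mathfrak{g})=0$ otherwise. Substituting this into the recursion with $k=2$, only $\mathfrak{t}=\mathcal{O}_{K}$ and divisors $\mathfrak{j}$ with $\mathfrak{d}\mid\mathfrak{j}$ survive; as $\mu(\mathfrak{j})=0$ unless $\mathfrak{j}$ is squarefree, this shows $S_2$ vanishes unless $\mathfrak{d}$ is squarefree and $\mathfrak{d}\mid\mathfrak{e}$, and the same observation propagated through the recursion shows $S_k(\mathfrak{e},\mathfrak{g})=0$ whenever $\mathfrak{d}\nmid\mathfrak{e}$, while the asserted right-hand side is itself $0$ when $\mathfrak{d}$ is not squarefree (its factor $\mu(\mathfrak{d})^{k-1}$ then vanishes, $k\geq2$). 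So from here I assume $\mathfrak{d}$ squarefree and $\mathfrak{d}\mid\mathfrak{e}$; the remaining evaluation of $S_2$ then comes down to $\sum_{\mathfrak{d}\mid\mathfrak{j}\mid\mathfrak{e}}\mu(\mathfrak{j})/\varphi(\mathfrak{j})$, which on writing $\mathfrak{j}=\mathfrak{d}\mathfrak{s}$ with $(\mathfrak{s},\mathfrak{d})=1$ equals $\tfrac{\mu(\mathfrak{d})}{\varphi(\mathfrak{d})}\prod_{\mathfrak{p}\mid\mathfrak{e},\,\mathfrak{p}\nmid\mathfrak{d}}\bigl(1-\tfrac{1}{N(\mathfrak{p})-1}\bigr)$, giving exactly the claimed formula at $k=2$.

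For the inductive step fix $k\geq3$ and assume the formula at $k-1$. For brevity write, for a prime $\mathfrak{p}$ and $m\geq2$,
\[
A_m(\mathfrak{p})=\sum_{i=0}^{m-1}\frac{(-1)^i}{(N(\mathfrak{p})-1)^i},\qquad B_m(\mathfrak{p})=\sum_{i=0}^{m-2}\frac{(-1)^i}{(N(\mathfrak{p})-1)^i},
\]
so the formula for $S_{k-1}(\mathfrak{j},\mathfrak{t})$ (nonzero only when $\mathfrak{d}\mid\mathfrak{j}$) carries $\prod_{\mathfrak{p}\mid\mathfrak{j},\,\mathfrak{p}\nmid\mathfrak{d}}A_{k-1}(\mathfrak{p})$ and $\prod_{\mathfrak{p}\mid\mathfrak{t}}B_{k-1}(\mathfrak{p})$, while the goal at level $k$ is to produce $A_k$ and $B_k$. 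Substituting this formula into the recursion, pulling out the $(\mathfrak{j},\mathfrak{t})$-independent factor $\varphi(\mathfrak{n})^{k-1}\psi(r)\mu(\mathfrak{d})^{k-2}\prod_{\mathfrak{p}\mid\mathfrak{d}}(N(\mathfrak{p})-1)^{-(k-2)}$, I would evaluate the two remaining divisor sums as Euler products: with $\mathfrak{j}=\mathfrak{d}\mathfrak{s}$, $\mathfrak{s}$ ranging over squarefree divisors of the prime-to-$\mathfrak{d}$ part of $\mathfrak{e}$, the $\mathfrak{j}$-sum becomes $\tfrac{\mu(\mathfrak{d})}{\varphi(\mathfrak{d})}\prod_{\mathfrak{p}\mid\mathfrak{e},\,\mathfrak{p}\nmid\mathfrak{d}}\bigl(1-\tfrac{A_{k-1}(\mathfrak{p})}{N(\mathfrak{p})-1}\bigr)$ and the $\mathfrak{t}$-sum becomes $\prod_{\mathfrak{p}\mid\mathfrak{g}}\bigl(1-\tfrac{B_{k-1}(\mathfrak{p})}{N(\mathfrak{p})-1}\bigr)$. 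The crux is then the elementary identity
\[
1-\frac{1}{x}\sum_{i=0}^{m}\frac{(-1)^i}{x^i}=\sum_{i=0}^{m+1}\frac{(-1)^i}{x^i}\qquad(x=N(\mathfrak{p})-1),
\]
which converts $1-\tfrac{A_{k-1}(\mathfrak{p})}{N(\mathfrak{p})-1}$ into $A_k(\mathfrak{p})$ and $1-\tfrac{B_{k-1}(\mathfrak{p})}{N(\mathfrak{p})-1}$ into $B_k(\mathfrak{p})$. Finally, $\mu(\mathfrak{d})^{k-2}\mu(\mathfrak{d})=\mu(\mathfrak{d})^{k-1}$ and, $\mathfrak{d}$ being squarefree, $\tfrac{1}{\varphi(\mathfrak{d})}\prod_{\mathfrak{p}\mid\mathfrak{d}}(N(\mathfrak{p})-1)^{-(k-2)}=\prod_{\mathfrak{p}\mid\mathfrak{d}}(N(\mathfrak{p})-1)^{-(k-1)}$; collecting the pieces gives the asserted expression, completing the induction.

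The step I expect to be genuinely error-prone — and would write out in full — is this last bookkeeping: keeping the three prime ranges ($\mathfrak{p}\mid\mathfrak{d}$; $\mathfrak{p}\mid\mathfrak{e}$ with $\mathfrak{p}\nmid\mathfrak{d}$; $\mathfrak{p}\mid\mathfrak{g}$) correctly separated, handling the factorization $\mathfrak{j}=\mathfrak{d}\mathfrak{s}$, and tracking the index shifts in the telescoping identity. I do not anticipate any conceptual obstacle beyond this: Lemmas~\ref{rec} and~\ref{chr} together with \eqref{k=1} already furnish all the structure, and everything after the set-up is a forced computation.
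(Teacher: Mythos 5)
Your proposal is correct and follows essentially the same route as the paper: induction on $k$ via the recursion of Lemma~\ref{rec}, with the base case settled by \eqref{k=1} together with Lemma~\ref{chr} (forcing $\mathfrak{d}\mid\mathfrak{e}$ and $\mathfrak{d}$ squarefree), and the inductive step reduced to Euler products collapsed by the telescoping identity $1-\tfrac{1}{x}\sum_{i=0}^{m}\tfrac{(-1)^i}{x^i}=\sum_{i=0}^{m+1}\tfrac{(-1)^i}{x^i}$. The only cosmetic difference is that you isolate the $k=1$ evaluation as a separate step before running the recursion, whereas the paper folds it directly into the $k=2$ computation.
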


\begin{proof}
We prove this lemma by induction on $k$. If $k=2$, then by the recursion Lemma \ref{rec} and the equation \eqref{k=1}, we get
\begin{eqnarray*}
\sum_{u \in (\mathcal{O}_{K}/\mathfrak{d})^{*}}\psi(u)N_2(\mathfrak{n}, \mathfrak{e}, \mathfrak{g}, \mathfrak{d},u)&=&\sum_{\substack{u \in (\mathcal{O}_{K}/\mathfrak{d})^{*}}}\psi(u)\frac{\varphi(\mathfrak{n})}{\varphi(\mathfrak{e})\varphi(\mathfrak{g})}\sum_{\mathfrak{j}\mid \mathfrak{e}}\mu(\mathfrak{j})\sum_{\mathfrak{t}\mid \mathfrak{g}}\mu(\mathfrak{t})N_{1}(\mathfrak{n}, \mathfrak{j}, \mathfrak{t}, \mathfrak{d},u)\\
&=& \frac{\varphi(\mathfrak{n})}{\varphi(\mathfrak{e})\varphi(\mathfrak{g})}\sum_{\substack{u \in (\mathcal{O}_{K}/\mathfrak{d})^{*}\\u\equiv r \pmod{(\mathfrak{j}, \mathfrak{d})}}}\psi(u)\sum_{\mathfrak{j}\mid \mathfrak{e}}\mu(\mathfrak{j})\sum_{\substack{\mathfrak{t}\mid \mathfrak{g}\\\mathfrak{t}=1}}\mu(\mathfrak{t})\frac{\varphi(\mathfrak{n})}{\varphi([\mathfrak{j}, \mathfrak{d}])}\\
&=& \frac{\varphi(\mathfrak{n})^2}{\varphi(\mathfrak{e})\varphi(\mathfrak{g})} \sum_{\mathfrak{j}\mid \mathfrak{e}}\frac{\mu(\mathfrak{j})}{\varphi([\mathfrak{j}, \mathfrak{d}])} \sum_{\substack{u \in (\mathcal{O}_{K}/\mathfrak{d})^{*}\\u\equiv r \pmod{(\mathfrak{j}, \mathfrak{d})}}}\psi(u).
\end{eqnarray*}

By Lemma \ref{chr}, we have $$\sum_{\substack{u \in (\mathcal{O}_{K}/\mathfrak{d})^*\\ u\equiv r \pmod{(\mathfrak{j},\mathfrak{d})}}} \psi(u)\neq 0  \;\;\;\;\text{ if and only if }\;\;\;\; (\mathfrak{j},\mathfrak{d})=\mathfrak{d}, \;\; \text{ that is, if and only if }\;\; \mathfrak{d}\mid \mathfrak{j}.$$
Furthermore, if $\mathfrak{d} \mid \mathfrak{j}$, then $$\sum_{\substack{u \in (\mathcal{O}_{K}/\mathfrak{d})^*\\ u\equiv r \pmod{(\mathfrak{j},\mathfrak{d})}}} \psi(u) = \psi(r).$$
Hence
\begin{eqnarray*}
\sum_{u \in (\mathcal{O}_{K}/\mathfrak{d})^{*}}\psi(u)N_2(\mathfrak{n}, \mathfrak{e}, \mathfrak{g}, \mathfrak{d},u)&=& \frac{\varphi(\mathfrak{n})^2 \psi(r)}{\varphi(\mathfrak{e})\varphi(\mathfrak{g})} \sum_{\substack{\mathfrak{j}\mid \mathfrak{e}\\\mathfrak{d}\mid \mathfrak{j}}}\frac{\mu(\mathfrak{j})}{\varphi([\mathfrak{j}, \mathfrak{d}])}\\
&=& \frac{\varphi(\mathfrak{n})^2 \psi(r)}{\varphi(\mathfrak{e})\varphi(\mathfrak{g})} \sum_{\substack{\mathfrak{d}\mid\mathfrak{j}\mid \mathfrak{e}}}\frac{\mu(\mathfrak{j})}{\varphi(\mathfrak{j})}.
\end{eqnarray*}

Therefore, it is clear that, if $\mathfrak{d}\nmid \mathfrak{e}$ or if $\mathfrak{d}$ is not square-free, then the sum is empty. If $\mathfrak{d}\mid \mathfrak{e}$, then
\begin{eqnarray*}
\sum_{u \in (\mathcal{O}_{K}/\mathfrak{d})^{*}}\psi(u)N_2(\mathfrak{n}, \mathfrak{e}, \mathfrak{g}, \mathfrak{d},u)&=& \frac{\varphi(\mathfrak{n})^2 \psi(r) \mu(\mathfrak{d})}{\varphi(\mathfrak{e})\varphi(\mathfrak{g})}\prod_{\mathfrak{p}\mid \mathfrak{d}}\frac{1}{N(\mathfrak{p})-1}\prod_{\substack{\mathfrak{p}\mid \mathfrak{e}\\\mathfrak{p}\nmid \mathfrak{d}}}\left(1-\frac{1}{N(\mathfrak{p})-1}\right).
\end{eqnarray*}
Hence the formula is true for $k=2$. Assume that the formula is true for $k-1$. Then by the recursion Lemma \ref{rec}, we get
\begin{eqnarray*}
& &\sum_{u \in (\mathcal{O}_{K}/\mathfrak{d})^{*}}\psi(u)N_k(\mathfrak{n}, \mathfrak{e}, \mathfrak{g}, \mathfrak{d},u)\\
&=&\sum_{\substack{u \in (\mathcal{O}_{K}/\mathfrak{d})^{*}}}\psi(u)\frac{\varphi(\mathfrak{n})}{\varphi(\mathfrak{e})\varphi(\mathfrak{g})}\sum_{\mathfrak{j}\mid \mathfrak{e}}\mu(\mathfrak{j})\sum_{\mathfrak{t}\mid \mathfrak{g}}\mu(\mathfrak{t})N_{k-1}(\mathfrak{n}, \mathfrak{j}, \mathfrak{t}, \mathfrak{d},u)\\
&=& \frac{\varphi(\mathfrak{n})}{\varphi(\mathfrak{e})\varphi(\mathfrak{g})}\sum_{\mathfrak{j}\mid \mathfrak{e}}\mu(\mathfrak{j})\sum_{\substack{\mathfrak{t}\mid \mathfrak{g}\\(\mathfrak{t},\mathfrak{j})=1}}\mu(\mathfrak{t})\sum_{\substack{u \in (\mathcal{O}_{K}/\mathfrak{d})^{*}}}\psi(u)N_{k-1}(\mathfrak{n}, \mathfrak{j}, \mathfrak{t}, \mathfrak{d},u)\\
&=& \frac{\varphi(\mathfrak{n})}{\varphi(\mathfrak{e})\varphi(\mathfrak{g})}\sum_{\substack{\mathfrak{j}\mid \mathfrak{e}\\\mathfrak{d}\mid \mathfrak{j}}}\mu(\mathfrak{j})\sum_{\substack{\mathfrak{t}\mid \mathfrak{g}\\(\mathfrak{t},\mathfrak{j})=1}}\mu(\mathfrak{t})\frac{\varphi(\mathfrak{n})^{k-1} \psi(r)\mu(\mathfrak{d})^{k-2}}{\varphi(\mathfrak{j})\varphi(\mathfrak{t})}\prod_{\mathfrak{p}\mid \mathfrak{d}}\frac{1}{(N(\mathfrak{p})-1)^{k-2}}\\
&\times & \prod_{\substack{\mathfrak{p}\mid \mathfrak{j}\\\mathfrak{p}\nmid \mathfrak{d}}}\left(1- \frac{1}{N(\mathfrak{p})-1}+\cdots+\frac{(-1)^{k-2}}{(N(\mathfrak{p})-1)^{k-2}} \right) \prod_{\substack{\mathfrak{p}\mid \mathfrak{t}}}\left(1- \frac{1}{N(\mathfrak{p})-1}+\cdots+\frac{(-1)^{k-3}}{(N(\mathfrak{p})-1)^{k-3}} \right)\\
&=& \frac{\varphi(\mathfrak{n})^k \psi(r)\mu(\mathfrak{d})^{k-2}}{\varphi(\mathfrak{e})\varphi(\mathfrak{g})}\prod_{\mathfrak{p}\mid \mathfrak{d}}\frac{1}{(N(\mathfrak{p})-1)^{k-2}}\;\; \sum_{\substack{\mathfrak{j}\mid \mathfrak{e}\\\mathfrak{d}\mid \mathfrak{j}}}\frac{\mu(\mathfrak{j})}{\varphi(\mathfrak{j})}\;\;\prod_{\substack{\mathfrak{p}\mid \mathfrak{j}\\\mathfrak{p}\nmid \mathfrak{d}}}\left(1- \frac{1}{N(\mathfrak{p})-1}+\cdots+\frac{(-1)^{k-2}}{(N(\mathfrak{p})-1)^{k-2}} \right)\\
&\times & \sum_{\substack{\mathfrak{t}\mid \mathfrak{g}}}\frac{\mu(\mathfrak{t})}{\varphi(\mathfrak{t})}\;\; \prod_{\substack{\mathfrak{p}\mid \mathfrak{t}}}\left(1- \frac{1}{N(\mathfrak{p})-1}+\cdots+\frac{(-1)^{k-3}}{(N(\mathfrak{p})-1)^{k-3}} \right)\\
&=& \frac{\varphi(\mathfrak{n})^k \psi(r)\mu(\mathfrak{d})^{k-2}}{\varphi(\mathfrak{e})\varphi(\mathfrak{g})}\prod_{\mathfrak{p}\mid \mathfrak{d}}\frac{1}{(N(\mathfrak{p})-1)^{k-2}}\mu(\mathfrak{d})\prod_{\mathfrak{p}\mid \mathfrak{d}}\frac{1}{N(\mathfrak{p})-1}\\
&\times & \prod_{\substack{\mathfrak{p}\mid \mathfrak{e}\\\mathfrak{p}\nmid \mathfrak{d}}}1-\frac{1}{N(\mathfrak{p})-1}\left(1- \frac{1}{N(\mathfrak{p})-1}+\cdots+\frac{(-1)^{k-2}}{(N(\mathfrak{p})-1)^{k-2}} \right)\\
&\times & \prod_{\substack{\mathfrak{p}\mid \mathfrak{g}}}1-\frac{1}{N(\mathfrak{p})-1}\left(1- \frac{1}{N(\mathfrak{p})-1}+\cdots+\frac{(-1)^{k-3}}{(N(\mathfrak{p})-1)^{k-3}} \right)\\
&=& \frac{\varphi(\mathfrak{n})^k \psi(r)\mu(\mathfrak{d})^{k-1}}{\varphi(\mathfrak{e})\varphi(\mathfrak{g})}\prod_{\mathfrak{p}\mid \mathfrak{d}}\frac{1}{(N(\mathfrak{p})-1)^{k-1}} \prod_{\substack{\mathfrak{p}\mid \mathfrak{e}\\\mathfrak{p}\nmid \mathfrak{d}}}\left(1- \frac{1}{N(\mathfrak{p})-1}+\cdots+\frac{(-1)^{k-1}}{(N(\mathfrak{p})-1)^{k-1}} \right)\\
&\times & \prod_{\substack{\mathfrak{p}\mid \mathfrak{g}}}\left(1- \frac{1}{N(\mathfrak{p})-1}+\cdots+\frac{(-1)^{k-2}}{(N(\mathfrak{p})-1)^{k-2}} \right).
\end{eqnarray*}
This proves the result.
\end{proof}

{\noindent \bf Proof of Theorem \ref{MAIN-TH}.}
We now continue the evaluation of $M_{\chi,k}(\mathfrak{n})$. From the equation \eqref{sum} and Lemma \ref{prodprime}, we have
\begin{eqnarray*}
M_{\chi,k}(\mathfrak{n})&=&\sum_{\substack{\mathfrak{e}\mid \mathfrak{n}}}(\mu \ast f)(\mathfrak{e})\sum_{\substack{b_1,b_2,\ldots, b_s \in \mathcal{O}_{K}/\mathfrak{n}\\\mathfrak{e} \mid b_1,\ldots, \mathfrak{e} \mid b_s}}\;\;\sum_{\mathfrak{g}\mid \mathfrak{n}} \mu(\mathfrak{g})\sum_{u \in (\mathcal{O}_{K}/\mathfrak{d})^{*}}\psi(u)N_k(\mathfrak{n}, \mathfrak{e}, \mathfrak{g}, \mathfrak{d},u)\\
&=& \sum_{\substack{\mathfrak{e}\mid \mathfrak{n}\\\mathfrak{d}\mid \mathfrak{e}}}(\mu \ast f)(\mathfrak{e})\sum_{\substack{b_1,b_2,\ldots, b_s \in \mathcal{O}_{K}/\mathfrak{n}\\\mathfrak{e} \mid b_1,\ldots, \mathfrak{e} \mid b_s}}\;\;\sum_{\substack{\mathfrak{g}\mid \mathfrak{n}\\(\mathfrak{g},\mathfrak{e})=1}} \mu(\mathfrak{g})\frac{\varphi(\mathfrak{n})^k \psi(r)\mu(\mathfrak{d})^{k-1}}{\varphi(\mathfrak{e})\varphi(\mathfrak{g})}\prod_{\mathfrak{p}\mid \mathfrak{d}}\frac{1}{(N(\mathfrak{p})-1)^{k-1}} \\
&\times & \prod_{\substack{\mathfrak{p}\mid \mathfrak{e}\\\mathfrak{p}\nmid \mathfrak{d}}}\left(1- \frac{1}{N(\mathfrak{p})-1}+\cdots+\frac{(-1)^{k-1}}{(N(\mathfrak{p})-1)^{k-1}} \right) \prod_{\substack{\mathfrak{p}\mid \mathfrak{g}}}\left(1- \frac{1}{N(\mathfrak{p})-1}+\cdots+\frac{(-1)^{k-2}}{(N(\mathfrak{p})-1)^{k-2}} \right)\\
&=& \varphi(\mathfrak{n})^k \psi(r)\mu(\mathfrak{d})^{k-1} \prod_{\mathfrak{p}\mid \mathfrak{d}}\frac{1}{(N(\mathfrak{p})-1)^{k-1}}\;\; \sum_{\substack{\mathfrak{e}\mid \mathfrak{n}\\\mathfrak{d}\mid \mathfrak{e}}}\frac{(\mu \ast f)(\mathfrak{e})}{\varphi(\mathfrak{e})}\;\; \prod_{\substack{\mathfrak{p}\mid \mathfrak{e}\\\mathfrak{p}\nmid \mathfrak{d}}}\left(1- \frac{1}{N(\mathfrak{p})-1}+\cdots+\frac{(-1)^{k-1}}{(N(\mathfrak{p})-1)^{k-1}} \right)\\
&\times & \sum_{\substack{b_1,b_2,\ldots, b_s \in \mathcal{O}_{K}/\mathfrak{n}\\\mathfrak{e} \mid b_1,\ldots, \mathfrak{e} \mid b_s}}\;\; \sum_{\substack{\mathfrak{g}\mid \mathfrak{n}\\(\mathfrak{g},\mathfrak{e})=1}} \frac{\mu(\mathfrak{g})}{\varphi(\mathfrak{g})}\;\; \prod_{\substack{\mathfrak{p}\mid \mathfrak{g}}}\left(1- \frac{1}{N(\mathfrak{p})-1}+\cdots+\frac{(-1)^{k-2}}{(N(\mathfrak{p})-1)^{k-2}} \right)\\
&=& \varphi(\mathfrak{n})^k \psi(r)\mu(\mathfrak{d})^{k-1} \prod_{\mathfrak{p}\mid \mathfrak{d}}\frac{1}{(N(\mathfrak{p})-1)^{k-1}}\;\;\sum_{\substack{\mathfrak{e}\mid \mathfrak{n}\\\mathfrak{d}\mid \mathfrak{e}}}\frac{(\mu \ast f)(\mathfrak{e})}{\varphi(\mathfrak{e})}\;\;\prod_{\substack{\mathfrak{p}\mid \mathfrak{e}}}\left(1- \frac{1}{N(\mathfrak{p})-1}+\cdots+\frac{(-1)^{k-1}}{(N(\mathfrak{p})-1)^{k-1}} \right) \\
&\times & \prod_{\substack{\mathfrak{p}\mid \mathfrak{d}}}\left(1- \frac{1}{N(\mathfrak{p})-1}+\cdots+\frac{(-1)^{k-1}}{(N(\mathfrak{p})-1)^{k-1}} \right)^{-1} \sum_{\substack{b_1,b_2,\ldots, b_s \in \mathcal{O}_{K}/\mathfrak{n}\\\mathfrak{e} \mid b_1,\ldots, \mathfrak{e} \mid b_s}}\\
&\times & \prod_{\substack{\mathfrak{p}\mid \mathfrak{n}\\\mathfrak{p}\nmid \mathfrak{e}}}1-\frac{1}{N(\mathfrak{p})-1}\left(1- \frac{1}{N(\mathfrak{p})-1}+\cdots+\frac{(-1)^{k-2}}{(N(\mathfrak{p})-1)^{k-2}} \right)\\
&=& \varphi(\mathfrak{n})^k \psi(r)\mu(\mathfrak{d})^{k-1} \prod_{\mathfrak{p}\mid \mathfrak{d}}\frac{1}{(N(\mathfrak{p})-1)^{k-1}}\;\;\sum_{\substack{\mathfrak{e}\mid \mathfrak{n}\\\mathfrak{d}\mid \mathfrak{e}}}\frac{(\mu \ast f)(\mathfrak{e})}{\varphi(\mathfrak{e})}\;\;\prod_{\substack{\mathfrak{p}\mid \mathfrak{e}}}\left(1- \frac{1}{N(\mathfrak{p})-1}+\cdots+\frac{(-1)^{k-1}}{(N(\mathfrak{p})-1)^{k-1}} \right)\\
&\times & \prod_{\substack{\mathfrak{p}\mid \mathfrak{d}}}\left(1- \frac{1}{N(\mathfrak{p})-1}+\cdots+\frac{(-1)^{k-1}}{(N(\mathfrak{p})-1)^{k-1}} \right)^{-1} \sum_{\substack{b_1,b_2,\ldots, b_s \in \mathcal{O}_{K}/\mathfrak{n}\\\mathfrak{e} \mid b_1,\ldots, \mathfrak{e} \mid b_s}}\\
&\times &  \prod_{\substack{\mathfrak{p}\mid \mathfrak{n}}}\left(1- \frac{1}{N(\mathfrak{p})-1}+\cdots+\frac{(-1)^{k-1}}{(N(\mathfrak{p})-1)^{k-1}} \right)\;\;\prod_{\substack{\mathfrak{p}\mid \mathfrak{e}}}\left(1- \frac{1}{N(\mathfrak{p})-1}+\cdots+\frac{(-1)^{k-1}}{(N(\mathfrak{p})-1)^{k-1}} \right)^{-1}.
\end{eqnarray*}
Since $$\varphi_{k}(\mathfrak{n}) = \varphi(\mathfrak{n})^k \prod_{\mathfrak{p} \mid \mathfrak{n}} \left(1-\frac{1}{N(\mathfrak{p})-1}+\frac{1}{(N(\mathfrak{p})-1)^2}+\cdots+ \frac{(-1)^{k-1}}{(N(\mathfrak{p})-1)^{k-1}}\right),$$ we have

\begin{eqnarray*}
M_{\chi,k}(\mathfrak{n})&=&
\varphi_k(\mathfrak{n}) \psi(r)\mu(\mathfrak{d})^{k-1}\prod_{\mathfrak{p}\mid \mathfrak{d}}\frac{1}{(N(\mathfrak{p})-1)^{k-1}}\;\; \sum_{\substack{\mathfrak{e}\mid \mathfrak{n}\\\mathfrak{d}\mid \mathfrak{e}}}\frac{(\mu \ast f)(\mathfrak{e})}{\varphi(\mathfrak{e})} \sum_{\substack{b_1,b_2,\ldots, b_s \in \mathcal{O}_{K}/\mathfrak{n}\\\mathfrak{e} \mid b_1,\ldots, \mathfrak{e} \mid b_s}}\\ 
&&\prod_{\substack{\mathfrak{p}\mid \mathfrak{d}}}\left(1- \frac{1}{N(\mathfrak{p})-1}+\cdots+\frac{(-1)^{k-1}}{(N(\mathfrak{p})-1)^{k-1}} \right)^{-1}\\
&=& \mu(\mathfrak{d})^{k-1}\psi(r)\varphi\left(\frac{\mathfrak{n}_{0}^{k}}{\mathfrak{d}^{k-1}}\right)\varphi_k\left(\frac{\mathfrak{n}}{\mathfrak{n}_{0}}\right) \sum_{\substack{\mathfrak{d}\mid \mathfrak{e}\mid \mathfrak{n}\\ \mathfrak{e}\mid b_1,\ldots,\mathfrak{e}\mid b_s}}\frac{(\mu \ast f)(\mathfrak{e})}{\varphi(\mathfrak{e})}
\end{eqnarray*}

where $\mathfrak{n}_{0} \mid \mathfrak{n}$ is such that $\mathfrak{n}_0$ has the same prime ideal factors as that of $\mathfrak{d}$ and $\gcd\left(\mathfrak{n}_0, \frac{\mathfrak{n}}{\mathfrak{n}_0}\right) = 1$. This completes the proof of Theorem \ref{MAIN-TH}.
$\hfill\Box$

%%%%%%%%%%%%%%%%%%%%%%%%%%%%%%%%%%%%%%%%%%%%%

\medskip

{\bf Acknowledgements.} I would like to thank Ramakrishna Mission Vivekananda Educational and Research Institute, Belur Math for providing financial support.
%It is a pleasure to thank Prof. R. Thangadurai for carefully reading the manuscript and giving us some valuable suggestions that improved the readability of the paper. We gratefully acknowledge the anonymous referee for his/her detailed comments that enabled us to strengthen our results. The first author thanks Indian Institute of Technology, Guwahati and 

%
%\begin{ack}
%
%\end{ack}

\end{document}